	\newaliascnt{lemma}{thm}
	\newtheorem{lemma}[lemma]{Lemma}  
	\newaliascnt{prop}{thm}
	\newtheorem{prop}[prop]{Proposition} 
	\newaliascnt{defn}{thm}
	\newtheorem{defn}[defn]{Definition}
	\newaliascnt{cor}{thm}
	\newaliascnt{rem}{thm}
	\newaliascnt{exm}{thm}
	\newcommand{\rk}{\operatorname{rk}}
	\newcommand{\Sy}{\operatorname{S}}
	\newcommand{\Ker}{\operatorname{Ker}}
	\newcommand{\Ps}{\mathbb{P}}
	\newcommand{\ins}{\minushookup}
	\newcommand{\Span}[1]{\left\langle\,#1\,\right\rangle}
\begin{document}

\title[A Proof that the Maximal Rank for Plane Quartics is Seven]{A Proof that the Maximal Rank\\for Plane Quartics is Seven}
\author{Alessandro De Paris}

\begin{abstract}
At the time of writing, the general problem of finding the maximal Waring rank for homogeneous polynomials of fixed degree and number of variables (or, equivalently, the maximal symmetric rank for symmetric tensors of fixed order and in fixed dimension) is still unsolved. To our knowledge, the answer for ternary quartics is not widely known and can only be found among the results of a master's thesis by Johannes Kleppe at the University of Oslo (1999). In the present work we give a (direct) proof that the maximal rank for plane quartics is seven, following the elementary geometric idea of splitting power sum decompositions along three suitable lines.
\end{abstract}

\maketitle

\textbf{Keywords:} Waring rank, tensor rank, secant varieties.

\medskip
\textbf{MSC2010:} 15A99, 14M99.

\section{Introduction}

The (Waring) \emph{rank} of a homogeneous polynomial is the minimum number of summands needed to express it as a sum of powers of linear forms.  According to \cite{G}, the problem of finding the maximal rank for polynomials of fixed degree $d$ and number $n$ of variables may be called \emph{little Waring problem} for polynomials, in analogy with the classical problem in number theory. The \emph{big Waring problem} is a `generic version', with a solution that was given by a now classical theorem of Alexander and Hirschowitz (see, e.g., \cite[Theorem~3.2.2.4]{L}). To our knowledge, contrary to the number theoretic situation, the little Waring problem for polynomials is solved only for a few values of $n,d$.

Beyond the interest due to its connection with the classical Waring problem, this topic deserves attention as a part of tensor theory. Questions about tensors are attracting researchers because of the recent discovery of new applications (see \cite{L}). In that respect, the focus is mainly on low-rank and general (not necessarily symmetric) tensors. Nevertheless, high-rank symmetric tensors may well provide some useful insights in a field that, in spite of its long history and great recent efforts devoted to it, seems far from being completed.

In the relatively recent book \cite{L} (see the preamble to Chap.~9), possible ranks and border ranks are reported to be known only when $n=2$ or $d=2$ or $n=d=3$. A thorough study of the case $n=3$, $d=4$ is one of the main subjects of \cite{BGI}. Note that Theorem~44 of that paper would have probably been easily completed with the description of $\sigma_6\left(X_{2,4}\right)\backslash\sigma_5\left(X_{2,4}\right)$ if the fact that the maximal rank for plane quartic is seven had been known. For these reasons, the author wondered about that maximal rank. During the investigation, Kleppe's thesis \cite{K} was brought to our attention by Edoardo Ballico. We admit not having thoroughly checked that thesis, but we have good reasons to say that its results are highly reliable. In particular, the maximal rank for plane quartics is seven. However, we also have good reasons to believe that the approach we follow in the present paper significantly differs and is worthy of consideration. Some of Kleppe's results are also involved in the proof of a general bound for the rank of polynomials that is presented in~\cite{J} (see also \cite{BB}, \cite{BB2}). Note that the formula in that work gives a bound of nine for plane quartics. Hence, in view of the search for better general bounds and therefore in view of the little Waring problem for polynomials, a deeper understanding of the case of plane quartics may be useful.

Our basic idea is to look for summands that are forms in a lesser number of variables. In order to provide more details, let us first fix some standing conventions. An algebraically closed field $K$ of zero characteristic is fixed throughout the paper. The symmetric algebra of a $K$-vector space $V$ will be denoted by $\Sy^\bullet V$, with degree $d$-components denoted by $\Sy^dV$ and with the convention that they vanish for $d<0$. We also assume $\Sy^1V=V$. We keep fixed the notation $S^\bullet$, $S_\bullet$ for two such symmetric algebras on which a perfect pairing $a_1:S^1\times S_1\to K$ of $K$-vector spaces is tacitly assigned (of course, $S^d$ and $S_d$ are the degree $d$ homogeneous components of $S^\bullet$, $S_\bullet$, respectively). The perfect pairing induces the \emph{apolarity (perfect) pairing}
\[
a_d:S^d\times\ S_d\to K
\]
in each fixed degree $d$, in a natural way. Namely, it is uniquely determined by the condition
\[
a_d\left(\;x^1\cdots x^d\,,x_1\cdots x_d\;\right):=\operatorname{perm}\left(\begin{array}{ccc}a_1\left(x^1,x_1\right)&\cdots&a_1\left(x^1,x_d\right)\\\\\vdots&\ddots&\vdots\\\\a_1\left(x^d,x_1\right)&\cdots&a_1\left(x^d,x_d\right)\end{array}\right)\;,
\] 
for all $x^1\ldots x^d\in S^1$, $x_1\ldots x_d\in S_1$, where $\operatorname{perm}$ denotes the \emph{permanent} (a `signless determinant': $\operatorname{perm}\left(x^i_j\right):=\sum_{\sigma}x^i_{\sigma(i)}$, with $\sigma$ ranging over all permutations of the indices).

Given $s\in S^\delta$, $x\in S_d$, there exists a unique $y\in S_{d-\delta}$ such that
\[
a_{d-\delta}(t,y)=a_d(st,x)\;,\qquad\forall t\in S^{d-\delta}\;,
\]
because $a_{d-\delta}$ is a perfect pairing. We call the element $y$ the \emph{contraction} of~$x$ by~$s$ (it vanishes when $\delta>d$), and the definition extends by additivity for all $s\in S^\bullet,x\in S_\bullet$. We allow ourselves to borrow from the context of exterior algebras the notation for contraction:
\[
y=:s\ins x\;.
\]
It is convenient to keep in mind two (well-known) basic rules for contractions:
\[
st\ins x=s\ins\left(t\ins x\right)
\]
and
\begin{equation}\label{Li}
s\ins xy=(s\ins x)y+x(s\ins y)\;.
\end{equation}
From these rules we recover a very common description of the rings $S^\bullet$, $S_\bullet$: they are usual polynomial rings, $S^\bullet$ is usually denoted by $T=k\left[y_1,\ldots ,y_n\right]$, and its elements act as (constant coefficients) differential operators on the polynomials of $S_\bullet=:S=k\left[x_1,\ldots ,x_n\right]$. Our alternative notation aims at suggesting a geometric viewpoint from which (homogeneous) elements of $S_\bullet$ are `contravariant' and give `multipoints' in a projective space $\Ps$, meanwhile that of $S^\bullet$ are `covariant' and give hypersurfaces in $\Ps$ (nevertheless, at a technical level, the roles of the two rings are perfectly symmetric). Still in view of our more elementary geometric viewpoint, we shall use the orthogonality sign $\perp$ with reference to the original pairing $S^1\times S_1\to K$ only (and not for the apolar ideal). Therefore, $\left\{x,y\right\}^\perp$, with $x,y\in S_1$, will denote the set of $l\in S^1$ that vanish at $x,y$ (when viewed as linear forms, that is, $l\ins x=l\ins y=0$). For instance, in \cite{K} our $\left\{x,y\right\}^\perp$ would be denoted by $\left(x,y\right)^\perp_1$ (the first homogeneous component of the apolar ideal of $(x,y)\subset S$).

We shall not use angle parentheses to denote the apolarity pairings, because we are more comfortable with using them to indicate the linear span of a set of vectors. We prefer to (formally) look at points in the projective space $\Ps\left(S_1\right)$ as one-dimensional subspaces $\Span{x}\subseteq S_1$, $x\ne 0$. When a scheme structure is needed, $\Ps\left(S_1\right)$ may be (naturally) replaced by $\operatorname{Proj}S^{\bullet}$ (and $\Span{x}$ by the ideal generated by $\{x\}^\perp$).

Finally, we shall sometimes make use of the \emph{partial polarization map} $f_{\delta,d}:S^\delta\to S_d$ of $f\in S_{d+\delta}$ (cf., e.g., \cite[2.6.6]{L}). It is simply defined by
\[
f_{d,\delta}(t):=t\ins f
\]
and we shall keep the notation $f_{d,\delta}$.

Now that our standing notation is set up, let us quickly describe the idea to bound the rank we are following. In the case of a ternary quartic $f\in S_4$ ($\dim S_1=3$) this is easy. Indeed, let us consider in $\Ps S^3$ the closed subvariety $X$ consisting of cubics that are broken in three lines:
\[
X:=\left\{\Span{x^0x^1x^2}:\;x^0,x^1,x^2\in S^1-\{0\}\right\}\;.
\]
Consider also the subspace
\[
Y:=\left\{\Span{c}:\;c\in S^3-\{0\}, c\ins f=0\right\}\;.
\]
We have $\dim\Ps S^3=9$, $\dim X=6$, $\dim Y\ge 6$, so that $\dim\left(X\cap Y\right)\ge 3$. Hence we can always find $x^0,x^1,x^2\in S^1-\{0\}$ such that $x^0x^1x^2\ins f=0$. We expect that, generically, $x^0,x^1,x^2$ should be linearly independent, and in this case we can write
\[
f=f_0\left(x_1,x_2\right)+f_1\left(x_0,x_2\right)+f_2\left(x_0,x_1\right)\;,
\]
with $x_0,x_1,x_2$ being the basis of $S_1$ dual to $\left(x^0,x^1,x^2\right)$.
Moreover, we have three degrees of freedom in the decomposition, due to the possibility of moving ${x_0}^4,{x_1}^4,{x_2}^4$ among $f_0,f_1,f_2$ (generically, one might exploit this to reach $\rk f_i\le 2$). We shall explain how to manage special cases later. The implementation of this idea will lead us to amusing exercises about very elementary objects, such as quadric cones in a three-dimensional space and the rational normal quartic curve. By solving them, in our opinion we gain a perspective from which the intricacies of high rank symmetric tensors can be usefully organized.

\section{Preparation}

Since we are building our main proof on the basis of quite elementary facts, even the moderately experienced reader will likely prefer to prove these facts as an exercise, instead of being bored by reading detailed proofs. That is why in this preliminary section we shall limit ourselves to the statements and a few hints.

First of all, let us recall that the rank stratification for binary forms, i.e., when $\dim S_1=2$, is well known (form a geometric viewpoint, it is based on properties of rational normal curves). Recent references are, among many others, \cite[9.2.2]{L}, \cite{CS}, as well as \cite[chap.~1]{K}. To begin with, we recall that for a binary quartic $f\in S_4$, $\rk f\le 4$. Moreover, the secant variety $X$ to the rational normal quartic curve $Q$ that consists of all $\Span{x^4}$ with $x\in S_1$, is a hypersurface in $\Ps S_4$. Its complement is exactly the set of all $\Span{f}$ with $\rk f=3$. The equation for $X$ is given by the condition $\det f_{2,2}=0$, and therefore $\deg X=3$. Points $\Span{f}$ of the \emph{tangent} variety, but that lies outside $Q$, are exactly those for which $\rk f=4$; the tangent to $\Span{x^4}\in Q$ is $\Ps\Span{x^4,x^3y}$ with $y\in S^1-\Span{x}$. We need now to describe the rank stratification of particular planes in $\Ps S_4$.

\begin{lemma}\label{Casi}
Let $\dim S_1=2$, $S_1=\Span{x_0,x_1}$, and $W$ be a subspace of $S_4$ with $\dim W=3$ and containing $L:=\Span{{x_0}^4,{x_1}^4}$. Set $A:=\Ps W-\Ps L$, which can be regarded as an affine plane with line at infinity $\Ps L$, and
\[
R:=\left\{\Span f\in A: \rk f\ne 3\right\}\;,\qquad
R':=\left\{\Span f\in A: \rk f=4\right\}\;.
\]
Then we have one of the following alternatives \ref{c11}, \ref{c12}, \ref{c2}:
\begin{enumerate}
\item\label{c1} $R'$ consists of at most two points and
	\begin{enumerate}
	\item\label{c11} $R\ne\emptyset$ is an affine conic with points at infinity exactly $\Span{{x_0}^4}$, $\Span{{x_1}^4}$, and when $R$ possesses a singular point $\Span{x}$ we have $\rk x=1$ and $R'=\emptyset$; or
	\item\label{c12} $R\ne\emptyset$ is an affine line with point at infinity different from $\Span{{x_0}^4}$, $\Span{{x_1}^4}$; or
	\end{enumerate}
\item\label{c2} $R=R'\ne\emptyset$ is an affine line with point at infinity either $\Span{{x_0}^4}$ or $\Span{{x_1}^4}$, and, more precisely, $R=R'=A\cap\Ps{\Span{{x_0}^4,{x_0}^3x_1}}$ in the first case, $R=R'=A\cap\Ps{\Span{x_0{x_1}^3,{x_1}^4}}$ in the other.
\end{enumerate}
\end{lemma}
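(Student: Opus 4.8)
The plan is to exhibit $R$ as the affine trace of a single plane conic and to read the trichotomy off the degeneration type of that conic, dealing with the rank-four locus $R'$ afterwards by the multiplicity-of-roots criterion for binary quartics.

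First I would fix the binomially normalized monomial basis of $S_4$, writing a general quartic as $f=a_0x_0^4+4a_1x_0^3x_1+6a_2x_0^2x_1^2+4a_3x_0x_1^3+a_4x_1^4$, so that the middle catalecticant $f_{2,2}\colon S^2\to S_2$ has the Hankel matrix
\[
H=\begin{pmatrix}a_0&a_1&a_2\\a_1&a_2&a_3\\a_2&a_3&a_4\end{pmatrix},
\]
and $\det H=0$ is precisely the equation of the secant cubic $X$ recalled above. Since $L=\Span{{x_0}^4,{x_1}^4}$ is the locus $a_1=a_2=a_3=0$, I may subtract multiples of ${x_0}^4,{x_1}^4$ and take $W=L\oplus\Span{g}$ with $g=4b_1x_0^3x_1+6b_2x_0^2x_1^2+4b_3x_0x_1^3$ and $(b_1,b_2,b_3)\ne(0,0,0)$. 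Then $A$ is coordinatized by the pair $(a_0,a_4)$, the coefficients of ${x_0}^4$ and ${x_1}^4$, the direction $(s,t)$ at infinity being the point $\Span{s{x_0}^4+t{x_1}^4}\in\Ps L$. Expanding $\det H$ gives the equation of $R=X\cap A$ as
\[
b_2\,a_0a_4-b_3^2\,a_0-b_1^2\,a_4+2b_1b_2b_3-b_2^3=0,
\]
equivalently $(b_2a_0-b_1^2)(b_2a_4-b_3^2)=(b_1b_3-b_2^2)^2$ when $b_2\ne0$. Geometrically $R$ is just the affine part of the conic residual to the line $\Ps L\subseteq X$ inside the plane cubic $X\cap\Ps W$.

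Next I would classify this conic. If $b_2\ne0$ its quadratic part is $b_2a_0a_4$, so it meets $\Ps L$ exactly in the directions $a_4=0$ and $a_0=0$, that is in $\Span{{x_0}^4}$ and $\Span{{x_1}^4}$; this is alternative \ref{c11}. From the factored form it degenerates exactly when $b_1b_3=b_2^2$, splitting into the lines $a_0=b_1^2/b_2$ and $a_4=b_3^2/b_2$ through the point $(a_0,a_4)=(b_1^2/b_2,b_3^2/b_2)$; a direct check identifies this point with $\Span{x^4}$ for the $x$ whose fourth power has middle coefficients proportional to $b_1,b_2,b_3$, so it has rank one. If $b_2=0$ the equation becomes the line $b_3^2a_0+b_1^2a_4=0$, with point at infinity $\Span{b_1^2{x_0}^4-b_3^2{x_1}^4}$: this differs from $\Span{{x_0}^4},\Span{{x_1}^4}$ exactly when $b_1,b_3$ are both nonzero, which is alternative \ref{c12}; if one of them vanishes (not both, else $g=0$) the line is $a_4=0$ or $a_0=0$ and $g$ is $4b_1x_0^3x_1$ or $4b_3x_0x_1^3$, so $R=A\cap\Ps\Span{{x_0}^4,{x_0}^3x_1}$ or $R=A\cap\Ps\Span{x_0{x_1}^3,{x_1}^4}$, which is alternative \ref{c2}.

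It then remains to pin down $R'$. Where $H$ has rank two the quartic has rank two or four, the latter precisely when the one-dimensional kernel of $f_{2,2}$ is spanned by a perfect square (equivalently, $f$ has a root of multiplicity three), while the rank-one points are the fourth powers, where $H$ drops to rank one. Reading a kernel vector off the first adjugate column, the kernel quadric is proportional to $(b_2a_4-b_3^2)\xi_0^2+(b_2b_3-b_1a_4)\xi_0\xi_1+(b_1b_3-b_2^2)\xi_1^2$, whose discriminant
\[
(b_2b_3-b_1a_4)^2-4(b_2a_4-b_3^2)(b_1b_3-b_2^2)
\]
is independent of $a_0$ and of degree at most two in $a_4$; intersecting its at most two zeros with $R$ shows $R'$ has at most two points, as required in \ref{c1}. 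In the nodal subcase $b_1b_3=b_2^2$ this discriminant collapses to $(b_2b_3-b_1a_4)^2$, whose single zero is the node, i.e. the rank-one point, so $R'=\emptyset$ there; note also that the minor $b_1b_3-b_2^2$ is constant on $\Ps W$, so no rank-one point occurs in the smooth case. In alternative \ref{c2} every $f\in R$ is visibly ${x_0}^3(a_0x_0+4b_1x_1)$ (or the symmetric expression), a genuine triple-root quartic because $b_1\ne0$, whence $R=R'$; and $R\ne\emptyset$ always, since a conic or a line over an algebraically closed field has affine points while the displayed equation is never identically zero, so $\Ps W\not\subseteq X$. The step I expect to be the main obstacle is exactly this analysis of $R'$: one must be sure that the square-kernel criterion is precisely the rank-four condition on $X$ (so that the rank-one points are correctly separated off) and that the chosen adjugate column is a nonzero kernel vector away from the rank-one locus, so that the displayed degree-two discriminant genuinely controls $R'$.
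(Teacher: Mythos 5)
Your route is genuinely different from the paper's: the paper argues synthetically, projecting $\Ps W$ from the line $\Ps L$ onto $\Ps\left(S_4/L\right)$ and reading the alternatives off the position of the image point relative to the conic image of the rational normal quartic, whereas you restrict the catalecticant cubic $\det H=0$ to the plane and classify the residual conic by hand. Most of your computation is correct and does produce the trichotomy: the affine equation of $R$, its factored form, the case split $b_2\ne 0$ (case~\ref{c11}), $b_2=0$ with $b_1b_3\ne 0$ (case~\ref{c12}), $b_2=0$ with exactly one of $b_1,b_3$ zero (case~\ref{c2}), the points at infinity in each case, the identification of the singular point with a fourth power, and $R=R'$ in case~\ref{c2} are all right. (One small point you should make explicit: each root $a_4$ of your discriminant meets $R$ in at most one point only because the conic's equation is linear in $a_0$; a priori two lines $a_4=\mathrm{const}$ could meet a conic in four points, which would spoil the bound $|R'|\le 2$.)

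The genuine gap is in the $R'$ analysis of the nodal subcase $b_2\ne 0$, $b_1b_3=b_2^2$ --- exactly the step you flagged. There, the first adjugate column $\left(b_2a_4-b_3^2,\;b_2b_3-b_1a_4,\;b_1b_3-b_2^2\right)$ vanishes \emph{identically} on the entire component $\left\{a_4=b_3^2/b_2\right\}$ of $R$: all three entries are zero once $b_1b_3=b_2^2$ and $b_2a_4=b_3^2$. So on that component it is not a generator of $\Ker f_{2,2}$, and your discriminant carries no information; in particular the zero locus of $(b_2b_3-b_1a_4)^2$ is not ``the node'' but the whole line $a_4=b_2b_3/b_1=b_3^2/b_2$, i.e.\ that entire component of $R$. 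Taken at face value, your criterion would place every point of that component other than the node in $R'$ --- the opposite of the assertion $R'=\emptyset$ required in case~\ref{c11}, and incompatible even with $|R'|\le 2$. The conclusion is true and easily restored: either use the opposite adjugate column $\left(b_1b_3-b_2^2,\;b_1b_2-a_0b_3,\;b_2a_0-b_1^2\right)$, which on that component equals $\left(0,\;b_1b_2-a_0b_3,\;b_2a_0-b_1^2\right)$, is nonzero away from the node, and gives a kernel quadric divisible by $\xi_1$ but not proportional to ${\xi_1}^2$, hence never a square; or, more simply, observe that every point of that component is $x^4+\lambda{x_0}^4$ (the node, a fourth power, plus a multiple of ${x_0}^4$), hence of rank at most two, and symmetrically $x^4+\mu{x_1}^4$ on the other component. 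One of these arguments must be added; without it the clause ``$R$ singular $\Rightarrow R'=\emptyset$'' is unproven, and indeed contradicted by a naive reading of your own criterion.
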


The proof can safely be left to the reader, but we suggest to first keep in mind that points $\Span{f}\in\Ps W$ with $\rk f\ne 3$ constitute a reducible cubic curve with $\Ps L$ as a component. The following geometric considerations might also be helpful. Let us look at the projection \[\Ps S_4-\Ps L\;\longrightarrow\;\Ps\left(S_4/L\right)\;,\qquad\Span{x}\longmapsto\Span{\{x+L\}}\;,\] so that $\Ps W$ projects onto a point $P\in\Ps\left(S_4/L\right)$. Lines $\ell$ through $P$ in $\Ps\left(S_4/L\right)$ comes from hyperplanes of $\Ps{S_4}$ containing $\Ps W$. Such a hyperplane meets the rational normal quartic $Q$ (consisting of all $\Span{x^4}$, $x\in S_1$) in $\Span{{x_0}^4}$, $\Span{{x_1}^4}$, and further points $\Span{{x_2}^4}$, $\Span{{x_3}^4}$. If $\ell\ni P$ is not tangent (somewhere) to the projection $C$ of $Q$ (which is a conic), we have a secant line $\ell':=\Ps\Span{{x_3}^4,{x_4}^4}$ that must intersect the plane $\Ps W$, and of course if $\Span{x}\in\ell'\cap\Ps W$ then $\rk x\le 2$. Tangents give rank four (or rank one) forms instead. Conversely, any $\Span{x}\in A$ that lies on a secant or tangent line $\ell'$ to $Q$ that does not contain $\Span{{x_0}^4}$ or $\Span{{x_1}^4}$, must come in the previous way from the hyperplane joining $\ell'$ and $\Ps W$. With the above said in mind, let $P_0,P_1\in C$ be the projections  of (the tangents to $Q$ at) $\Span{{x_0}^4},\Span{{x_1}^4}\in Q$, and $\ell_0$ be the line through them. Then \hyperref[c2]{Case~\ref{c2}} occurs when $P$ comes to coincide with $P_0$ or $P_1$, and \hyperref[c12]{Case~\ref{c12}} occurs when $P\in\ell_0\,-\{P_0,P_1\}$. \hyperref[c11]{Case~\ref{c11}} occurs when $P\not\in\ell_0$, with $R$ being singular exactly when $P=\Span{x}$ also lies on $C$.

For ease of exposition, in this paper we make use of the following \textit{ad hoc} terminology, related to the situation of the above lemma.

\begin{defn}
Let $W$ be a $K$-vector space, $\dim W=3$, $y,z\in W$ distinct vectors and $R,R'\subset\Ps W-\Ps\Span{y,z}$. Throughout this paper we say that $\left(W,y,z,R,R'\right)$ is an \emph{R-configuration of type \ref*{c11}, \ref*{c12} or \ref*{c2}}, if it fulfils the corresponding condition in \autoref{Casi} with $y,z$ in place of ${x_0}^4,{x_1}^4$ (without reference to $x$ in \hyperref[c11]{Case~\ref{c11}} nor to the polynomial description of $R=R'$ in \hyperref[c2]{Case~\ref{c2}}).
\end{defn}

\begin{prop}\label{3D}
Let $\mathcal{C}_i=\left(W_i,y_i,z_i,R_i,R'_i\right)$, $i\in\{0,1,2\}$, be R-configurations. Let $W$ be a $K$-vector space, $\dim W=4$, $w_0,w_1,w_2\in W$ linearly independent vectors, and $\alpha_i:W\twoheadrightarrow W_i$, $i\in\{0,1,2\}$, surjective linear maps such that for each $i$, $\alpha_i$ sends $w_i$ into $0$ and the other two vectors into $y_i,z_i$ (in whatever order, but one-to-one). Finally, for each $i$, let us consider the (affine) map
\[
\widehat{\alpha_i}:\Ps W-\Ps{\Span{w_0,w_1,w_2}}\longrightarrow\Ps{W_i}-\Ps{\Span{x_i,y_i}}\;,\qquad \Span{w}\mapsto\Span{\alpha_i(w)}
\]
and set
\[
\widehat{R_i}:=\widehat{\alpha_i}^{-1}\left(R_i\right)\;,\qquad\widehat{R_i'}:=\widehat{\alpha_i}^{-1}\left(R_i'\right)\;.
\]
If $\mathcal{C}_0$ and $\mathcal{C}_1$ are not of type~\ref*{c2} and
\begin{equation}\label{Vuoto}
\left(\widehat{R_0}\cap\widehat{R_1}\right)-\left(\widehat{R_0'}\cup\widehat{R_1'}\cup\widehat{R_2'}\right)=\emptyset\;,
\end{equation}
then the R-configuration $\mathcal{C}_2$ is of type~\ref*{c2}, one of the others, say $\mathcal{C}_j$, is of type~\ref*{c11} with $R_j$ a reducible conic, and $\widehat{R_2'}$ is a component (plane) of $\widehat{R_j'}$.
\end{prop}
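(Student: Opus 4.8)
The plan is to transport the whole statement into the projective $3$-space $\Ps W$ and read every object as a projection from one vertex of the triangle $p_0,p_1,p_2$, where $p_i:=\Span{w_i}$. Since $\dim W=4$ and $\Ker\alpha_i=\Span{w_i}$, each $\widehat{\alpha_i}$ is the linear projection of $\Ps W$ from the point $p_i$ onto $\Ps W_i$, and the deleted locus $\Ps\Span{w_0,w_1,w_2}=:\Pi$ is exactly the plane at infinity, which $\widehat{\alpha_i}$ sends onto the line at infinity $\Ps\Span{y_i,z_i}$. First I would set up the dictionary between the three types of \autoref{Casi} and subvarieties of $\Ps W$ through $p_i$: taking projective closures, $\overline{\widehat{R_i}}$ is a quadric cone with vertex $p_i$ in type~\ref*{c11} (irreducible or a pair of planes according as the conic $R_i$ is smooth or reducible), and a plane through $p_i$ in types~\ref*{c12} and~\ref*{c2}; while $\overline{\widehat{R_i'}}$ is a union of at most two lines through $p_i$ in types~\ref*{c11} and~\ref*{c12} (and empty when $R_i$ is a reducible conic), but a whole plane through $p_i$ in type~\ref*{c2}. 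The incidences with the edges $\ell_{ij}:=\Ps\Span{w_i,w_j}$ of the triangle are the crucial bookkeeping: a type~\ref*{c11} cone contains both edges through its vertex, a type~\ref*{c2} plane contains exactly one edge through its vertex, and a type~\ref*{c12} plane contains none.

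Next I would rewrite \eqref{Vuoto}. Because $\widehat{R_i'}\subseteq\widehat{R_i}$ and everything lives in the affine chart $\Ps W-\Pi$, condition \eqref{Vuoto} says precisely that the affine part of the curve $\Gamma:=\overline{\widehat{R_0}}\cap\overline{\widehat{R_1}}$ is contained in $\widehat{R_0'}\cup\widehat{R_1'}\cup\widehat{R_2'}$. Here the common edge $\ell_{01}$ (which lies on both cones when $\mathcal C_0,\mathcal C_1$ are of type~\ref*{c11}) is a spurious component of $\Gamma$ sitting inside $\Pi$, so it is harmless; what matters is the residual curve, whose affine part is genuinely one-dimensional. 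A degree count gives residual degree $3$, $2$ or $1$ according as $0$, $1$ or $2$ of $\mathcal C_0,\mathcal C_1$ are of type~\ref*{c12}, and in each case the residual curve is not contained in $\Pi$.

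The engine of the proof is the elementary remark that a line lying on an \emph{irreducible} quadric cone passes through its vertex. Consequently a line contained in $\widehat{R_0'}\cup\widehat{R_1'}$ (so passing through $p_0$ or $p_1$) can be a component of $\Gamma$ only if it is a common ruling of the two surfaces, and for two irreducible cones with vertices $p_0\ne p_1$ the only such ruling is the edge $\ell_{01}$, which is at infinity. Hence, as long as neither $\overline{\widehat{R_0}}$ nor $\overline{\widehat{R_1}}$ degenerates into a pair of planes, the sets $\widehat{R_0'},\widehat{R_1'}$ meet the affine residual curve in only finitely many points; the same argument applied at $p_2$ shows that at most two lines through $p_2$ cannot cover it either. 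I would therefore conclude that a one-dimensional piece of the residual curve can be absorbed only by a two-dimensional component of the covering set, which forces $\widehat{R_2'}$ to be a plane, i.e. $\mathcal C_2$ to be of type~\ref*{c2}; this plane $\pi_2:=\widehat{R_2'}$ contains exactly one edge, say $\ell_{02}$, hence passes through $p_0$ and $p_2$ but not through $p_1$.

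Finally I would locate the reducible conic. Writing $D$ for the one-dimensional part of $\Gamma$ that lies in $\pi_2$, the inclusions $\overline{\widehat{R_j}}\cap\pi_2\supseteq D$ for $j=0,1$, together with the facts that $p_1\notin\pi_2$ (so $\overline{\widehat{R_1}}\cap\pi_2$ is a \emph{smooth} conic, carrying no line, whenever $\overline{\widehat{R_1}}$ is an irreducible cone) and that $\overline{\widehat{R_0}}\cap\pi_2$ has degree at most two, force $\pi_2$ itself to be an actual component of one of the two surfaces, say $\overline{\widehat{R_j}}$. A surface of the form $\overline{\widehat{R_j}}$ that splits off a plane is precisely a reducible quadric cone, so $\mathcal C_j$ is of type~\ref*{c11} with $R_j$ a reducible conic and $\widehat{R_2'}=\pi_2$ is one of its two plane components, as claimed. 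The main obstacle is not this final identification but the exhaustive bookkeeping compressed into the third paragraph: one must run the vertex/edge argument through every combination of types for $\mathcal C_0,\mathcal C_1$ — in particular ruling out the case where both are of type~\ref*{c12}, and checking that the \emph{second} plane of the reducible cone never leaves an uncovered affine curve — and verify in each situation that \eqref{Vuoto} fails unless the configuration is exactly the one described.
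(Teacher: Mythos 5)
Your proposal is correct in substance and rests on the same geometric core as the paper's own argument: work in $\Ps W\cong\Ps^3$ with $\Ps\Span{w_0,w_1,w_2}$ as the plane at infinity, read each $\widehat{R_i}$ as the affine part of a quadric cone or plane through $\Span{w_i}$, read each $\widehat{R_i'}$ as a finite union of lines with point at infinity $\Span{w_i}$ --- except in type~\ref*{c2}, where it is a plane containing one edge --- and observe that a one-dimensional component of $\widehat{R_0}\cap\widehat{R_1}$ cannot be absorbed by finitely many such lines. The difference is organizational, but it is not cosmetic. The paper pivots on a \emph{single} irreducible component $Y$ of $\widehat{R_0}\cap\widehat{R_1}$, chosen to have a point at infinity $P$ outside the edge $\Ps\Span{w_0,w_1}$; every conclusion (first that $\mathcal{C}_2$ is of type~\ref*{c2} and $Y\subseteq\widehat{R_2'}$, then that $\widehat{R_2'}\subseteq\widehat{R_j}$, whence $R_j$ is reducible) is forced simply by asking where $Y$ can sit, so the run over the types of $\mathcal{C}_0,\mathcal{C}_1$ essentially disappears. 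You instead take projective closures, count degrees of the residual curve, and then defer an exhaustive case analysis; that is exactly the ``cumbersome analysis'' the paper's device is built to avoid, and it is where most of your remaining work lives.

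One of your deferred cases is essential, not routine: your final step fails as stated when $\overline{\widehat{R_1}}$ is a pair of planes (type~\ref*{c11} with $R_1$ reducible). In that case $\overline{\widehat{R_1}}\cap\overline{\pi_2}$ is not a smooth conic but a pair of lines, and the one with point at infinity $\Span{w_0}$ can perfectly well be a common component of the two curves cut on $\pi_2$; so $D\ne\emptyset$ no longer forces $\pi_2$ to be a component of either surface. To dispose of this configuration you must argue \emph{outside} $\pi_2$: the curve cut by $\overline{\widehat{R_0}}$ on the second plane of $\overline{\widehat{R_1}}$ (a smooth conic, or a line whose point at infinity is not covered) can never lie inside $\widehat{R_0'}\cup\widehat{R_1'}\cup\widehat{R_2'}$, so \eqref{Vuoto} fails outright --- an argument of a different shape from your step~4, though it is precisely the ``second plane of the reducible cone'' check you flag at the end; the paper's choice of $Y$ with $P\notin\Ps\Span{w_0,w_1}$ handles it automatically. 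Finally, note that what you proved is that $\widehat{R_2'}$ is a component of $\widehat{R_j}$; that is the only sensible reading of the statement, since $R_j$ reducible forces $R_j'=\emptyset$ by \autoref{Casi}, so the ``$\widehat{R_j'}$'' printed there must be a slip for ``$\widehat{R_j}$'' --- and it is what the paper's own argument establishes as well.
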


Let us outline a way to organize a proof that to some extent avoids a cumbersome analysis. The dimension of each irreducible component of the intersection $X:=\widehat{R_0}\cap\widehat{R_1}$ is at least one. Considering $\Ps{\Span{w_0,w_1,w_2}}$ as the plane at infinity, it is easy to see that there must exist a component $Y$ of $X$ with a point $P$ at infinity that does not lie in the line $\Ps{\Span{w_0,w_1}}$. Note that $\widehat{R_0'}$ is a (possibly empty) union of lines through $\Span{w_0}$, and $\widehat{R_1'}$ a union of lines through $\Span{w_1}$. If $\mathcal{C}_2$ is not of type~\ref*{c2}, then the condition~\eqref{Vuoto} above would imply that $P=\Span{w_2}$ and that $Y$ is a line. But this is possible only when $\mathcal{C}_0$, $\mathcal{C}_1$ are of type~\ref*{c1}, with $R_0,R_1$ reducible conics. But recall that if $R_i$ is reducible, then $R'_i$ is empty, and note that when $R_0,R_1$ are reducible, the intersection $X$ must also contain two lines with points at infinity $\Span{w_0},\Span{w_1}$. Since that picture is incompatible with condition~\eqref{Vuoto}, we have that $\mathcal{C}_2$ must be of type~\ref*{c2} and that $\widehat{R_2'}$ must be a plane containing $Y$.

Now, the line at infinity of $\widehat{R_2'}$ is either $\Ps{\Span{w_0,w_2}}$ or $\Ps{\Span{w_1,w_2}}$, and let it be $\Ps{\Span{w_j,w_2}}$ with the appropriate $j\in\{0,1\}$. If $\widehat{R_2'}\cap\widehat{R_j}\ne\widehat{R_2'}$, then this intersection is a union of lines through $\Span{w_j}$; hence it can not contain $Y$ and \eqref{Vuoto} would fail. Therefore, $\widehat{R_2'}\subseteq\widehat{R_j}$ and henceforth $\widehat{R_j}$ is reducible. This immediately implies that also $R_j$ is reducible and that $\mathcal{C}_j$ is of type~\ref*{c11}.

Let us now state what happens when the situation of \autoref{Casi} degenerates `by collision' of $\Span{{x_0}^4}$ and $\Span{{x_1}^4}$.

\begin{lemma}\label{Casi2}
Let $\dim S_1=2$, $S_1=\Span{x_0,x_1}$, and $W$ be a subspace of $S_4$ with $\dim W=3$ and containing $L:=\Span{{x_0}^4,{x_0}^3x_1}$. Set $A:=\Ps W-\Ps L$, which can be regarded as an affine plane with line at infinity $\Ps L$, and
\[
R:=\left\{\Span f\in A: \rk f\ne 3\right\}\;,\qquad
R':=\left\{\Span f\in A: \rk f=4\right\}\;.
\]
Then $R'$ consists of at most two points, and we have one of the following alternatives \ref{c11}, \ref{c12}, \ref{c2}:
\begin{enumerate}
\item\label{d1} 
	\begin{enumerate}
	\item\label{d11} $R\ne\emptyset$ is an affine conic with one point at infinity $\Span{{x_0}^4}$ (hence, a `parabola'), and when this conic is degenerate we have that it is a (double) affine line, that there exist $\Span{x}\in R$ with $\rk x=1$ and that $R'=\emptyset$; or
	\item\label{d12} $R\ne\emptyset$ is a (simple) affine line with point at infinity different from $\Span{{x_0}^4}$; or
	\end{enumerate}
\item\label{d2} $R=R'=\emptyset$ and, moreover, $W=\Span{{x_0}^4,{x_0}^3x_1,{x_0}^2{x_1}^2}$.
\end{enumerate}
\end{lemma}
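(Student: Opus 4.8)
The statement is the limiting form of \autoref{Casi} under the collision $\Span{{x_1}^4}\to\Span{{x_0}^4}$, and the cleanest route I see is to make that precise through a single catalecticant computation rather than through a geometric limit. First I would pick a complement of $L$ in $W$: since $\dim W=3$ and $W\supseteq L=\Span{{x_0}^4,{x_0}^3x_1}$, write $W=L\oplus\Span{g}$ with
\[
g=b_2\,{x_0}^2{x_1}^2+b_3\,x_0{x_1}^3+b_4\,{x_1}^4\;,\qquad(b_2,b_3,b_4)\ne(0,0,0)\;,
\]
and take $(s,t)$ as affine coordinates on $A$ through $f=s\,{x_0}^4+t\,{x_0}^3x_1+g$, so that $(s:t)=(1:0)$ is the point $\Span{{x_0}^4}$ at infinity. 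By the rank stratification recalled above, a binary quartic satisfies $\rk f=3$ exactly when $\det f_{2,2}\ne 0$, whence $R=\{\Span f\in A:\det f_{2,2}=0\}$. Writing the $3\times3$ matrix of $f_{2,2}\colon S^2\to S_2$ in the monomial bases and expanding, I expect
\[
\det f_{2,2}=72\,(8b_2b_4-3{b_3}^2)\,s-216\,b_4\,t^2+72\,b_2b_3\,t-16\,{b_2}^3\;;
\]
the decisive features are that it is affine-linear in $s$ and quadratic in $t$, with no $st$- and no $s^2$-term.

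From this the shape of $R$ falls out by cases on $(b_2,b_3,b_4)$. If $b_4\ne0$, the quadratic part $-216\,b_4\,t^2$ forces the single point at infinity $(1:0)=\Span{{x_0}^4}$, placing us in \hyperref[d11]{Case~\ref{d11}}: the conic is an irreducible parabola when $8b_2b_4-3{b_3}^2\ne0$, while if $8b_2b_4=3{b_3}^2$ the $s$-term drops out and the equation becomes $-216\,b_4\,t^2+72\,b_2b_3\,t-16\,{b_2}^3=0$, whose discriminant equals $-1728\,{b_2}^2(8b_2b_4-3{b_3}^2)$ and hence vanishes, so this is a perfect square in $t$ and $R$ is a double affine line. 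If $b_4=0$ but $b_3\ne0$, then $8b_2b_4-3{b_3}^2=-3{b_3}^2\ne0$ and the equation is a genuine line $-216\,{b_3}^2\,s+72\,b_2b_3\,t-16\,{b_2}^3=0$, whose point at infinity has nonzero $t$-coordinate and so differs from $\Span{{x_0}^4}$: this is \hyperref[d12]{Case~\ref{d12}}. Finally $b_4=b_3=0$ gives $g=b_2\,{x_0}^2{x_1}^2$, i.e. $W=\Span{{x_0}^4,{x_0}^3x_1,{x_0}^2{x_1}^2}$, and the determinant collapses to the nonzero constant $-16\,{b_2}^3$, so $R=\emptyset$: this is \hyperref[d2]{Case~\ref{d2}}.

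It remains to control $R'$. By the stratification a binary quartic has rank $4$ precisely when it has a root of exact multiplicity three, so $R'$ consists of the $\Span f\in A$ whose dehomogenization $\varphi(z):=f(1,z)=b_4z^4+b_3z^3+b_2z^2+tz+s$ has a triple root. The key observation is that $\varphi''(z)=12b_4z^2+6b_3z+2b_2$ involves neither $s$ nor $t$: any triple root is a root of this fixed quadratic, there are at most two of them, and for each candidate $\rho$ the conditions $\varphi(\rho)=\varphi'(\rho)=0$ are affine-linear in $(s,t)$ and determine at most one point. This yields $\#R'\le2$ in all cases. In \hyperref[d2]{Case~\ref{d2}} we have $R=\emptyset$, hence $R'=\emptyset$. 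In the degenerate parabola ($b_4\ne0$, $8b_2b_4=3{b_3}^2$) the discriminant of $\varphi''$ is $36{b_3}^2-96b_2b_4=12(3{b_3}^2-8b_2b_4)=0$, so $\varphi''$ has a unique double root $\rho_0$, the only possible site of a multiple root; along the double line $R$ the coordinate $t$ is constant, so $\varphi'$ is fixed and only the constant term $s$ of $\varphi$ varies, whence $\varphi(\rho_0)=0$ holds for a single value $s=s_0$, at which $\varphi=b_4(z-\rho_0)^4$ is a fourth power. This both exhibits the promised rank-one point of $R$ and shows that at every other point of $R$ the quartic has no multiple root; therefore $R'=\emptyset$, as the statement requires.

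The classification of $R$ is immediate from the displayed determinant, so the step I expect to be the real work is the bookkeeping for $R'$: correctly separating the rank-four (exactly triple root) locus from the rank-one (quadruple root) point and from the rank-two part of $R$, and verifying that in the two degenerate cases no genuine rank-four form survives. The observation that $\varphi''$, and $\varphi'$ along the line $R$, are unaffected by the moving parameter is precisely what keeps this analysis finite and elementary.
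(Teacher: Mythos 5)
Your route is not the one the paper has in mind: the paper leaves \autoref{Casi2} entirely to the reader, and the hints it supplies for the companion \autoref{Casi} are synthetic (project the rational normal quartic $Q$ from the line $\Ps L$ --- which here is the \emph{tangent} line to $Q$ at $\Span{{x_0}^4}$ --- and track secant and tangent lines through points of $\Ps W$). Your argument instead normalizes $W=L\oplus\Span{g}$ and reads everything off one catalecticant computation. I checked your displayed polynomial: it is exactly $3456$ times the Hankel determinant of $f=s\,{x_0}^4+t\,{x_0}^3x_1+g$, so it is correct up to an irrelevant nonzero constant, and your trichotomy $b_4\ne 0$, $b_4=0\ne b_3$, $b_3=b_4=0$ does yield Cases~\ref{d11}, \ref{d12}, \ref{d2} respectively, discriminant computation included. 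The bound $\#R'\le 2$ via the $(s,t)$-independence of $\varphi''$ is also sound, with one point you should make explicit: identifying $R'$ with ``exact triple roots of $\varphi$'' silently discards a possible triple root of $f$ at $[0:1]$ (i.e.\ $f$ divisible by ${x_0}^3$), and the argument also needs $\varphi''\not\equiv 0$; both hold precisely because $(b_2,b_3,b_4)\ne(0,0,0)$, that is, because $g\notin L$. What your approach buys is explicitness --- the ``moreover'' clause of Case~\ref{d2} is immediate from the coefficients --- at the price of losing the geometric uniformity with \autoref{Casi}.

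There is, however, one genuine gap, sitting exactly where you predicted the real work to be. In the degenerate subcase ($b_4\ne 0$, $8b_2b_4=3{b_3}^2$) you assert that at the unique $s_0$ with $\varphi(\rho_0)=0$ one has $\varphi=b_4(z-\rho_0)^4$. As written this is a non sequitur: at that point you know $\varphi(\rho_0)=\varphi''(\rho_0)=0$, but the fourth-power conclusion --- and with it the rank-one point required by Case~\ref{d11} --- needs $\varphi'(\rho_0)=0$ along the line $t=t_0$, and the \emph{constancy} of $\varphi'$ along $R$ (which is all you observe) does not give its \emph{vanishing} at $\rho_0$. The fact is true and the patch is short. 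Since the discriminant of $\varphi''$ vanishes, $\varphi''=12b_4\left(z-\rho_0\right)^2$, hence $\varphi=b_4\left(z-\rho_0\right)^4+c\left(z-\rho_0\right)+d$ with $c=\varphi'(\rho_0)$, $d=\varphi(\rho_0)$. Rehomogenizing in the basis $\left(x_0,\,x_1-\rho_0x_0\right)$ of $S_1$ gives $f=b_4u^4+c\,u{x_0}^3+d\,{x_0}^4$ with $u:=x_1-\rho_0x_0$, whose Hankel determinant is $-b_4c^2/16$; as the rank~$\ne 3$ condition is basis-independent and $c=t+4b_4{\rho_0}^3+3b_3{\rho_0}^2+2b_2\rho_0$ is affine in $t$, the double line $R$ is precisely $\{c=0\}$. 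So on $R$ one has $f=b_4u^4+d\,{x_0}^4$: rank two for $d\ne 0$, rank one for $d=0$, never rank four --- which proves both remaining claims of Case~\ref{d11}. (A direct computation $\varphi'(\rho_0)=t-{b_3}^3/(16{b_4}^2)=t-t_0$ does the same job.) Note that the conclusion $R'=\emptyset$ survives even without the patch, since whether or not $\varphi'(\rho_0)$ vanishes on $R$, no point of $R$ carries a root of exact multiplicity three; it is only the existence of the rank-one point that your argument leaves unproved.
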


Finally, as a warm up, we present our approach to the problem in an easy situation (that will sometimes arise during the main proofs). At this early stage, the overlap of our arguments with those of \cite{K} is larger: see \cite[Theorem~3.6]{K} (in the case when $\{D_0=0\}$ is a union of distinct lines). Kleppe's proof is longer, but the statement of that theorem gives much more than an upper bound. Note also that under the hypothesis we are using below, that is, $x^0x^1\ins f=0$, Kleppe's theorem gives the best bound, that is, six.

\begin{prop}\label{Wu}
Let $\dim S_1=3$, $f\in S_4$. If there exist linearly independent $x^0,x^1\in S^1$ such that $x^0x^1\ins f=0$ then $\rk f\le 7$.
\end{prop}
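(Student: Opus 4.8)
The plan is to turn the hypothesis into a splitting of $f$ into two binary quartics sharing one variable, and then to save a single unit of rank by exploiting the one free parameter in that splitting, reading off the rank behaviour from \autoref{Casi}.

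First I would fix coordinates. Since $x^0,x^1$ are linearly independent, extend them to a basis $x^0,x^1,x^2$ of $S^1$ and let $x_0,x_1,x_2\in S_1$ be the dual basis, so that each $x^i$ acts as $\partial/\partial x_i$ on $S_\bullet$. The relation $x^0x^1\ins f=0$ then says that no monomial of $f$ is divisible by both $x_0$ and $x_1$; hence we can write $f=f_0+f_1$, where $f_0$ is a binary quartic in $x_0,x_2$ and $f_1$ a binary quartic in $x_1,x_2$. The common monomial is $x_2^4$, so this decomposition is unique up to the substitution $f_0\mapsto f_0+tx_2^4$, $f_1\mapsto f_1-tx_2^4$, $t\in K$. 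Each summand has rank at most $4$, giving the trivial bound $\rk f\le 8$; the whole point is to choose $t$ so that one summand drops to rank $\le 3$.

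If $f_0\in\Span{x_0^4,x_2^4}$ then $\rk f_0\le 2$ and $\rk f\le 6$, and symmetrically for $f_1$, so I may assume $W:=\Span{f_0,x_0^4,x_2^4}$ is genuinely $3$-dimensional. I would then apply \autoref{Casi} in the two variables $x_0,x_2$ (with $x_0^4,x_2^4$ playing the roles of ${x_0}^4,{x_1}^4$), to the plane $\Ps W$ and its affine part $A$. The relevant affine line is $\ell_0:=\left\{\Span{f_0+tx_2^4}\right\}\subset A$, whose point at infinity in $\Ps L$ is $\Span{x_2^4}$. In \hyperref[c11]{Case~\ref{c11}} and \hyperref[c12]{Case~\ref{c12}} the rank-four locus $R'$ consists of at most two points, so all but finitely many $t$ give $\rk\left(f_0+tx_2^4\right)\le 3$, and choosing such a $t$ yields $\rk f\le 3+4=7$. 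In \hyperref[c2]{Case~\ref{c2}} the locus $R=R'$ is a single affine line; if it is not $\ell_0$ then the two lines meet in at most one point and again some $t$ gives $\rk\left(f_0+tx_2^4\right)\le 3$, finishing as before.

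The only genuinely delicate situation is when $\ell_0$ coincides with the line $R=R'$ of \hyperref[c2]{Case~\ref{c2}}. Since $\ell_0$ has point at infinity $\Span{x_2^4}$, this can occur only in the sub-case $R=R'=A\cap\Ps\Span{x_0x_2^3,x_2^4}$, forcing $f_0\in\Span{x_0x_2^3,x_2^4}$, i.e. $f_0=x_2^3(ax_0+bx_2)$. In that event I would run the identical argument on $f_1$ using $\Span{f_1,x_1^4,x_2^4}$: either it produces a $t$ with $\rk\left(f_1+tx_2^4\right)\le 3$, whence $\rk f\le 4+3=7$, or likewise $f_1=x_2^3(cx_1+dx_2)$. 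In this last doubly-degenerate case $f=x_2^3\ell$ with $\ell$ a single linear form, which is a binary quartic of rank $\le 4\le 7$. I expect the main obstacle to be precisely the bookkeeping of \hyperref[c2]{Case~\ref{c2}}: matching the two descriptions of $R=R'$ to the point at infinity $\Span{x_2^4}$ of $\ell_0$, and checking that the doubly-degenerate case collapses to the form $x_2^3\ell$; the generic cases are immediate once \autoref{Casi} is invoked.
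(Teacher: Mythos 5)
Your proposal is correct and follows essentially the same route as the paper's own proof: both split $f$ into two binary quartics sharing the variable $x_2$, exploit the one-parameter freedom of moving ${x_2}^4$ between the summands, and invoke \autoref{Casi} to see that the only obstruction to saving a unit of rank is the doubly degenerate case $f_0\in\Span{x_0{x_2}^3,{x_2}^4}$, $f_1\in\Span{x_1{x_2}^3,{x_2}^4}$, in which $f$ collapses to a binary quartic of rank at most four. The only difference is expository: the paper phrases the obstruction as the impossibility of both affine lines lying entirely in the rank-four loci, while you unfold the cases of the lemma one at a time.
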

\begin{proof}
Let us choose $x_2\in\{x^0,x^1\}^\perp-\{0\}$, $x_1\in\{x^0\,\}^\perp-\Span{x_2}$, $x_0\in\{x^1\,\}^\perp-\Span{x_2}$, and set
\[
V_0:=\Sy^4\Span{x_1,x_2}\;,\qquad V_1:=\Sy^4\Span{x_0,x_2}\;.
\]
From $x^0x^1\ins f=0$ readily follows that $f\in V_0+V_1$, that is, $f=f_0+f_1$ with $f_0\in V_0$, $f_1\in V_1$. Hence $\rk f=\rk\left(f_0+f_1\right)\le\rk f_0+\rk f_1\le 8$, because $f_0,f_1$ are polynomials in two variables. Note that, moreover, $f=\left(f_0+k{x_2}^4\right)+\left(f_1-k{x_2}^4\right)$ for all $k\in K$. Then $\rk f=8$ only if $\rk\left(f_0+k{x_2}^4\right)=\rk\left(f_1-k{x_2}^4\right)=4$ for all $k\in K$.

Let us set \[W_0:=\Span{f_0,x_1^4,x_2^4}\;,\qquad W_1:=\Span{x_0^4,f_1,x_2^4}\;.\]
If $\dim W_0=2$ or $\dim W_1=2$, then $\rk f_0\le 2$ or $\rk f_1\le 2$. Hence we can assume that $\dim W_0=\dim W_1=3$, so that \autoref{Casi} applies to both $W_0$ and $W_1$. According to the lemma,
\[
\rk\left(f_0+k{x_2}^4\right)=\rk\left(f_1-k{x_2}^4\right)=4\quad\forall k\in K
\]
can happen only in \hyperref[c2]{Case~\ref{c2}} and, more specifically, only when $f_0\in\Span{x_1{x_2}^3,{x_2}^4}, f_1\in\Span{x_0{x_2}^3,{x_2}^4}$. But in this case we have $f=f_0+f_1\in\Span{x{x_2}^3,{x_2}^4}$, with $x\in\Span{x_0,x_1}$, so that $f$ is a polynomial in two variables $x,x_2$ (actually, of rank four).
\end{proof}

\section{The general case}

\begin{prop}\label{Gener}
Let $\dim S_1=3$, $f\in S_4$. If there exist linearly independent $x^0,x^1,x^2\in S^1$ such that $x^0x^1x^2\ins f=0$ then $\rk f\le 7$.
\end{prop}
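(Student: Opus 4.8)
The plan is to make the heuristic of the introduction precise and then run it through \autoref{3D}. Since $x^0,x^1,x^2$ are linearly independent in the $3$-dimensional space $S^1$, they form a basis; let $x_0,x_1,x_2\in S_1$ be the dual basis. Reading the contraction by $x^0x^1x^2$ as the operator $\partial_{x_0}\partial_{x_1}\partial_{x_2}$, the hypothesis $x^0x^1x^2\ins f=0$ says exactly that the coefficients of ${x_0}^2x_1x_2$, $x_0{x_1}^2x_2$, $x_0x_1{x_2}^2$ in $f$ vanish, so $f$ is supported on the remaining twelve monomials and
\[
f=f_0+f_1+f_2,\qquad f_0\in\Sy^4\Span{x_1,x_2},\ f_1\in\Sy^4\Span{x_0,x_2},\ f_2\in\Sy^4\Span{x_0,x_1}.
\]
Each $f_i$ is a binary quartic, whence the crude bound $\rk f\le 12$. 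The decomposition is unique only up to redistributing the three pure powers ${x_0}^4,{x_1}^4,{x_2}^4$, each of which lies in two of the summands, so the splittings form a $3$-dimensional affine family that I coordinatise by $(a_1,a_2,b_0)\in K^3$ with $f_0=h_0+a_1{x_1}^4+a_2{x_2}^4$, where $h_i$ denotes the fixed part of $f_i$ spanned by genuinely mixed monomials (and $f_1,f_2$ receive the complementary amounts of the pure powers). The whole game is to spend these three scalars to force the rank sum down to seven.

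First I would dispose of the degenerate splittings. If some $h_i=0$, then every monomial of $f$ divisible by the relevant product of two dual coordinates is absent (the mixed ones because $h_i=0$, the others by the apolarity hypothesis), so $f$ is annihilated by a product of two independent linear forms and \autoref{Wu} already yields $\rk f\le 7$. Hence I may assume every $h_i\ne 0$; then each of $W_0:=\Span{h_0,{x_1}^4,{x_2}^4}$, $W_1:=\Span{h_1,{x_0}^4,{x_2}^4}$, $W_2:=\Span{h_2,{x_0}^4,{x_1}^4}$ is three-dimensional, so \autoref{Casi} applies and produces R-configurations $\mathcal{C}_0,\mathcal{C}_1,\mathcal{C}_2$. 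The crucial observation is that these slot exactly into the frame of \autoref{3D}: homogenising the parameter space gives a $\Ps W$ with $\dim W=4$ whose plane at infinity is spanned by the three directions $w_0,w_1,w_2$; the map $\widehat{\alpha_i}$ forgets the parameter not entering $f_i$ and sends a splitting to $\Span{f_i}\in\Ps{W_i}$; and $\widehat{R_i}$, $\widehat{R_i'}$ become the loci of splittings with $\rk f_i\ne 3$ and $\rk f_i=4$ respectively.

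The engine is then a single remark: a splitting lying in the complement of the set appearing in \eqref{Vuoto} — a point of $\widehat{R_0}\cap\widehat{R_1}$ that avoids $\widehat{R_0'}\cup\widehat{R_1'}\cup\widehat{R_2'}$ — satisfies $\rk f_0\le 2$, $\rk f_1\le 2$ and $\rk f_2\le 3$, so that $\rk f\le 2+2+3=7$. Since the labels may be permuted, I only need this complement to be nonempty for one choice of which configuration plays the role of $\mathcal{C}_2$. Now \autoref{3D} says that, provided the two configurations named $\mathcal{C}_0,\mathcal{C}_1$ are not of type~\ref{c2}, the vanishing \eqref{Vuoto} forces $\mathcal{C}_2$ to be of type~\ref{c2}. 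Consequently, if none of the three is of type~\ref{c2}, then for every labeling \eqref{Vuoto} must fail (otherwise $\mathcal{C}_2$ would be of type~\ref{c2}, a contradiction), a good splitting exists, and we are done; and if exactly one is of type~\ref{c2}, I name it $\mathcal{C}_2$ and am done unless \eqref{Vuoto} holds.

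The hard part is the rigid residual case, in which \eqref{Vuoto} holds — so by \autoref{3D} one of $\mathcal{C}_0,\mathcal{C}_1$, say $\mathcal{C}_j$, is of type~\ref{c11} with reducible conic and $\widehat{R_2'}$ is one of the two planes composing $\widehat{R_j}$ — together with the case in which two or three configurations are of type~\ref{c2} (where \autoref{3D} cannot even be invoked, since $\mathcal{C}_0,\mathcal{C}_1$ would include a type~\ref{c2} one). In all of these the generic splitting gives rank sum $3+3+3=9$ and every low-rank stratum is blocked by \eqref{Vuoto}, so the sliders provably cannot bring the sum below $8$, and the bound must instead come from the rigidity itself: being of type~\ref{c2} forces the corresponding $h_i$ to be a cube times a linear form, so that $W_i$ is spanned by a fourth power, a fourth power, and $x^3y$; and a reducible type~\ref{c11} conic forces a comparably special $h_j$. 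My expectation is that such $f$ are so constrained that they are annihilated by \emph{some other} product of two independent linear forms — returning us to \autoref{Wu} — or admit an explicit splitting of rank at most seven read off directly, with the collisions of pure powers that then occur handled by \autoref{Casi2}. Pinning down this degenerate geometry, and verifying that no sub-case secretly demands an eighth summand, is where I expect essentially all of the difficulty to lie.
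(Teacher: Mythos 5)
Your setup reproduces the paper's own frame almost exactly: the splitting $f=f_0+f_1+f_2$ with the three-parameter family of redistributions of ${x_0}^4,{x_1}^4,{x_2}^4$, the reduction of the degenerate case $h_i=0$ to \autoref{Wu}, the identification of the parameter space with a four-dimensional $\Ps W$ fitting \autoref{3D}, and the observation that failure of \eqref{Vuoto} (for some labeling with $\mathcal{C}_0,\mathcal{C}_1$ not of type~\ref{c2}) yields a splitting of ranks $2+2+3$. All of that is correct and is how the paper's proof begins.

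However, there is a genuine gap: you stop exactly where the paper's real work starts. Your last paragraph correctly isolates the residual cases --- (a) \eqref{Vuoto} holds, so one configuration is of type~\ref{c2} and another of type~\ref{c11} with reducible conic; (b) two or three configurations are of type~\ref{c2}, where \autoref{3D} cannot be applied at all --- but then replaces the argument by an ``expectation'' that such $f$ must be annihilated by another product of two independent forms or admit a visible rank-$7$ splitting. That expectation is in the right spirit but is not a proof, and these cases occupy more than half of the paper's argument. Concretely, the paper resolves them by explicit changes of the three splitting lines: in case (a) it uses the rank-one singular point of the reducible conic $R_j$ to normalize $f_1=(x_0+x_2)^4$, $f_2=x_0{x_1}^3$, and then the substitution $x'_0:=x_0+x_2$ produces a new decomposition with $f'_1=0$, i.e.\ $\dim W'_1=2$, which falls back to \autoref{Wu}; in case (b) it splits into three subcases according to whether $W_0,W_1$ contain ${x_i}^3x_2$ or $x_i{x_2}^3$, each handled by a tailored substitution ($x'_2=kx_0+x_2$ with $k$ generic, or $x'_2=hx_0+kx_1+x_2$ together with an explicit completion of $f'_2$ to a fourth power, or $x'_0=x_0+kx_1$ forcing $\dim W'_0=2$), plus a final twist when all three configurations are of type~\ref{c2}. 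None of this is routine bookkeeping: choosing the substitutions so that the \emph{new} triple of lines avoids relapsing into the bad cases is precisely the delicate point (note also that \autoref{Casi2}, which you invoke for ``collisions,'' is never needed here --- the paper uses it only in the later special case of linearly dependent lines). As it stands, your proposal establishes the bound only outside the residual locus, so the statement is not proved.
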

\begin{proof}
Let $\left(x_0,x_1,x_2\right)$ be the basis of $S_1$ dual to $\left(x^0,x^1,x^2\right)$ and set
\[
V_0:=\Sy^4\Span{x_1,x_2}\;,\quad V_1:=\Sy^4\Span{x_0,x_2}\;,\quad V_2:=\Sy^4\Span{x_0,x_1}
\]
($V_0,V_1,V_2\subset S_4$). Let \[\sigma:V_0\oplus V_1\oplus V_2\to V_0+V_1+V_2\] be the canonical map $\left(v_0,v_1,v_2\right)\mapsto v_0+v_1+v_2$. We have
\begin{equation}\label{Ker}
\Ker\sigma=\Span{w_0,w_1,w_2}\;,
\end{equation}
with
\[
w_0:=\left(0,{x_0}^4,-{x_0}^4\right)\;,\quad w_1:=\left({x_1}^4,0,-{x_1}^4\right)\;,\quad w_2:=\left({x_2}^4,-{x_2}^4,0\right)\;.
\]
From $x^0x^1x^2\ins s=0$ readily follows that $f\in V_0+V_1+V_2$. Then $W:=\sigma^{-1}\left(\Span{f}\right)$ is a four-dimensional vector space. For each $i\in\{0,1,2\}$, let $W_i$ be the image of $W$ in the summand $V_i$ through the projection map $V_0\oplus V_1\oplus V_2\to V_i$, and let us denote by $\alpha_i$ the restriction $W\to W_i$. For all $w\in\sigma^{-1}(f)$ we have
\begin{equation}\label{Dec}
f=f_0+f_1+f_2\;,\quad f_i:=\alpha_i(w)\in W_i\;\,\forall i\;.
\end{equation}
From \eqref{Ker} it follows that
\[
{x_1}^4,{x_2}^4\in W_0\;,\quad {x_0}^4,{x_2}^4\in W_1\;,\quad {x_1}^4,{x_2}^4\in W_2\;,
\]
hence for every decomposition~\eqref{Dec} we have
\begin{equation}\label{Rel}
W_0=\Span{f_0,{x_1}^4,{x_2}^4}\;,\quad W_1=\Span{{x_0}^4,f_1,{x_2}^4}\;,\quad W_2=\Span{{x_0}^4,{x_1}^4,f_2}
\end{equation}
(therefore $2\le\dim W_i\le 3$ for each $i$).

If $\dim W_i=2$ for some $i$, say $i=0$, then it must be $W_0=\Span{{x_1}^4,{x_2}^4}$, and we can choose a suitable $w\in\sigma^{-1}(f)$ that gives a decomposition~\eqref{Dec} with $f_0=0$. This immediately implies that $x^1x^2\ins f=0$, and the statement follows from \autoref{Wu}. Thus, from now on, we can assume that $\dim W_0=\dim W_1=\dim W_2=3$.

According to \autoref{Casi}, we get three R-configurations $\mathcal{C}_i=\left(W_i,y_i,z_i,R_i,R'_i\right)$, $i\in\{0,1,2\}$, with the obvious meaning of the notation. Note that we can use \autoref{3D}, and borrow the notation $\widehat{R_i},\widehat{R_i'}$ from there. Suppose that there exists $P\in\left(\widehat{R_0}\cap\widehat{R_1}\right)-\left(\widehat{R_0'}\cup\widehat{R_1'}\cup\widehat{R_2'}\right)$. We can certainly find a representative vector $w$ of $P$ (i.e., a generator) such that $\sigma\left(w\right)=f$. Hence we get a decomposition~\eqref{Dec} with $\rk f_0\le 2$, $\rk f_1\le 2$, $\rk f_2\le 3$, which immediately implies that $\rk f\le 7$. Thus the statement is proved whenever condition~\eqref{Vuoto} in \autoref{3D} fails for $\mathcal{C}_0,\mathcal{C}_1,\mathcal{C}_2$. According to the proposition, $\rk f\le 7$ is still to be proven only in the following two occurrences:
\renewcommand{\theenumi}{\Roman{enumi}}
\begin{enumerate}
\item\label{primo} up to possibly reordering the indices, $W_2=\Span{{x_0}^4,x_0{x_1}^3,{x_1}^4}$, $R_1$ is a reducible conic, and the plane $\widehat{R'_2}$ is a component of $\widehat{R_1'}$ (\footnote{We can exclude that $W_2=\Span{{x_0}^4,{x_0}^3x_1,{x_1}^4}$ because in this case $R'_2=\Ps\Span{{x_0}^4,{x_0}^3x_1}-\Span{{x_0}^4}$. This implies that, considering $\Ps\Ker\sigma$ as the plane at infinity of $\Ps W$, $\Span{w_1}$ can not be a point at infinity of $\widehat{R'_2}$ ($\Span{w_0}$ is). On the contrary, $\Span{w_1}$ must be a point at infinity of every component of~$\widehat{R'_1}$.}); or
\item\label{secondo} at least two among $\mathcal{C}_0,\mathcal{C}_1,\mathcal{C}_2$ are of type~\ref*{c2}.
\end{enumerate}
The workaround we shall use in these cases is basically a change of variables. Let $x'_0,x'_1,x'_2\in S_1$ be linearly independent and let $\left({x'}^0,{x'}^1,{x'}^2\right)$ be the basis of $S^1$ dual to $\left(x'_0,x'_1,x'_2\right)$. In each case, we shall choose $x'_0,x'_1,x'_2$ in such a way that the decomposition~\eqref{Dec} gives, after a linear substitution, again a decomposition of the form
\begin{equation}\label{Dec2}
f=f'_0+f'_1+f'_2\;,\qquad f'_0\in\Sy^4\Span{x'_1,x'_2}\,,\; f'_1\in\Sy^4\Span{x'_0,x'_2}\,,\; f'_2\in\Sy^4\Span{x'_0,x'_1}\;.
\end{equation}
This is equivalent to say that the choice of the new variables again gives ${x'}^0{x'}^1{x'}^2\ins f=0$. Hence we can define new spaces $W',W'_0,W'_1,W'_2$, and apply the previous analysis. In particular, for each $i$, $f'_i\in W_i$ and
\begin{equation}\label{Rel2}
W'_0=\Span{f'_0,{x'_1}^4,{x'_2}^4}\;,\quad W'_1=\Span{{x'_0}^4,f'_1,{x'_2}^4}\;,\quad W'_2=\Span{{x'_0}^4,{x'_1}^4,f'_2}\;.
\end{equation}

Let us now face \hyperref[primo]{Case~\ref{primo}}. Since $\mathcal{C}_1$ is of type~\ref*{c11} with $R_1$ containing a singular point $\Span{x}$, according to \autoref{Casi},~\ref{c11}, we have $\rk x=1$. We can choose a $w=\left(f_1,f_2,f_3\right)$ that gives a decomposition~\eqref{Dec} with $f_1\in\Span{x}$. Hence $f_1=\left(\alpha{x_0}+\beta{x_2}\right)^4$ with $\alpha,\beta\ne 0$. Since $\Span{f_1}=\Span{x}$ is contained in both components of $R_1$, we have that $w\in\widehat{R'_2}$, hence $\Span{f_2}\in R'_2=\Ps\Span{x_0{x_1}^3,{x_1}^4}-\Span{{x_1}^4}$. Up to adding to $w$ a suitable multiple of $w_0$, we can assume that $f_2=\gamma x_0{x_1}^3$, with $\gamma\ne 0$ (basically, we are moving the monomial in ${x_1}^4$ of $f_2$ into $f_0$). By rescaling $x_0,x_1,x_2$ we can further simplify:
\[
f_1=(x_0+x_2)^4\;,\quad f_2=x_0{x_1}^3\;.
\]
Now let us set $x'_0:=x_0+x_2$, $x'_1:=x_1$, $x'_2:=x_2$. By substitution we get
\[
f=f'_0+{x'_0}^4+x'_0{x'_1}^3\;,
\]
with $f'_0\in\Sy^4\Span{x'_1,x'_2}$, which can be viewed as a decomposition of the form~\eqref{Dec2} with $f'_1=0$ (\footnote{As a cross-check, note that the dual basis is ${x'}^0=x^0,{x'}^1=x^1,{x'}^2=-x^0+x^2$, and indeed ${x'}^0{x'}^1{x'}^2\ins f=0$ (in the present case ${x^0}^2x^1\ins f=0$ because $f_2=x_0{x_1}^3$).}). Hence $\dim W'_1=2$, and we already know that $\rk f\le 7$ in such a case.

We are left with \hyperref[secondo]{Case~\ref{secondo}}. We can assume that (up to possibly reordering the indices) the R-configurations $\mathcal{C}_0$ and $\mathcal{C}_1$ are of type~\ref*{c2}. We have to consider the following subcases:
\renewcommand{\theenumi}{\roman{enumi}}
\begin{enumerate}
\item\label{i} $W_0=\Span{{x_1}^4,{x_1}^3x_2,{x_2}^4}$, $W_1=\Span{{x_0}^4,{x_0}^3x_2,{x_2}^4}$;
\item\label{ii} $W_0=\Span{{x_1}^4,{x_1}^3x_2,{x_2}^4}$, $W_1=\Span{{x_0}^4,x_0{x_2}^3,{x_2}^4}$, or $W_0=\Span{{x_1}^4,x_1{x_2}^3,{x_2}^4}$, $W_1=\Span{{x_0}^4,{x_0}^3x_2,{x_2}^4}$;
\item\label{iii} $W_0=\Span{{x_1}^4,x_1{x_2}^3,{x_2}^4}$, $W_1=\Span{{x_0}^4,x_0{x_2}^3,{x_2}^4}$.
\end{enumerate}

We preliminary also assume that $\mathcal{C}_2$ is not of type~\ref*{c2} (the opposite case will be discussed at the end). 

In \hyperref[i]{Case~\ref{secondo},~\ref{i}}, if ${x^2\,}^4\ins f\ne 0$ (that is, the monomial ${x_2}^4$ occurs with a nonzero coefficient in $f$, considered as a polynomial in $x_0,x_1,x_2$), let us set $x'_0:=x_0$, $x'_1:=x_1$, $x'_2=kx_0+x_2$. Taking into account~\eqref{Rel}, we readily get from \eqref{Dec} a decomposition in the form~\eqref{Dec2}. Taking into account \eqref{Rel2}, we also can fix $k$ in such a way that neither $W'_1$ nor $W'_2$ gives an R-configuration of type~\ref*{c2} (recall we are also assuming that $\mathcal{C}_2$ is not of type~\ref*{c2}). Hence in the new variables we fall outside \hyperref[secondo]{Case~\ref{secondo}}, so that $\rk f\le 7$ has already been proved.

Still considering \hyperref[i]{Case~\ref{secondo},~\ref{i}}, but now with ${x^2\,}^4\ins f=0$, the appropriate substitution is of the form $x'_0:=x_0$, $x'_1:=x_1$, $x'_2=hx_0+kx_1+x_2$. The key point here is that we can choose $h,k$ and further scalars $\alpha,\beta$ in such a way that $g'_2:=f'_2+\alpha {x'_0}^4+\beta {x'_1}^4$ is a $4$-th power of a linear form (details are not difficult and left to the reader), hence its rank is at most one. Moreover, for a generic choice of $\gamma\in K$, the rank of both polynomials $g'_0:=f'_0-\alpha {x'_1}^4+\gamma {x'_2}^4$ and $g'_1:=f'_1-\beta {x'_0}^4-\gamma {x'_2}^4$ is at most three. Hence the rank of $f=g'_0+g'_1+g'_2$ is at most $3+3+1=7$, as required.

In \hyperref[ii]{Case~\ref{secondo},~\ref{ii}}, up to possibly exchanging the indices $0,1$, we can assume that $W_0=\Span{{x_1}^4,{x_1}^3x_2,{x_2}^4}$ and $W_1=\Span{{x_0}^4,x_0{x_2}^3,{x_2}^4}$. Here we can proceed exactly as in the subcase~\ref{i} when ${x^2\,}^4\ins f\ne 0$ (without any need of this restrictive assumption, because of the presence of $x_0{x_2}^3$ in $f_1$).

In \hyperref[iii]{Case~\ref{secondo},~\ref{iii}} it suffices to set $x'_0=x_0+kx_1$, $x'_1=x_1$, $x'_2=x_2$ and choose $k$ in such a way that $\dim W'_0=2$ (which gives an already settled case).

Note that \hyperref[secondo]{Case~\ref{secondo}} is now solved whenever we have \emph{exactly two} R-configurations of type~\ref*{c2}. The only event left is when all R-configurations are of type~\ref*{c2}. With reference to the previous discussion of the subcases~\ref{i}, \ref{ii}, \ref{iii}, we needed that $\mathcal{C}_2$ is not of type~\ref*{c2} only in \hyperref[i]{Case~\ref{secondo},~\ref{i}} with ${x^2\,}^4\ins f\ne 0$ and in \hyperref[ii]{Case~\ref{secondo},~\ref{ii}}, in order to assure that $k$ could be chosen in such a way neither $W'_1$ nor $W'_2$ gave an R-configuration of type~\ref*{c2}. But in both situations, $k$ can be chosen in such a way that $W'_1$ do not give an $R$-configuration of type~\ref*{c2}. If $W'_2$ does, we nevertheless fall into the exactly two type~\ref*{c2} R-configurations case, which is now solved.
\end{proof}

\section{On reduction to the general case}

At the end of the Introduction, we explained how to find triples $x^0,x^1,x^2\in S^1$ such that $x^0x^1x^2\ins f=0$ ($\dim S_1=3$, $f\in S_4$). In the notation there, the set of all such $\Span{x^0x^1x^2}\in\Ps S^3$ is an algebraic set $X\cap Y$ of dimension at least three. Since $Y$ is a very special subspace, one can hope it will not also give a very special intersection with $X$. That is, an intersection that entirely falls within the special locus corresponding to linearly dependent $x^0,x^1,x^2$. The following simple result encourages this expectation.

\begin{prop}\label{Reduc}
Let $\dim S_1=3$, $f\in S_4$. There exist distinct $\Span{x^0},\Span{x^1},\Span{l}\in\Ps S^1$ such that $x^0x^1l\ins f=0$.
\end{prop}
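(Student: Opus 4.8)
The plan is to read off the required triple as the set of linear factors of a totally decomposable cubic that lies in the $6$-dimensional system $Y$ of the Introduction and has three \emph{pairwise non-proportional} factors; such a cubic is exactly a triple of distinct points $\Span{x^0},\Span{x^1},\Span{l}$ with $x^0x^1l\ins f=0$.

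First I would dispatch the non-concise case. Put $N:=\{l\in S^1:\ l\ins f=0\}$. If $N\ne\{0\}$, choose $\Span{l}\in\Ps N$ together with any two further points $\Span{x^0},\Span{x^1}$ making the three pairwise distinct (possible as $\Ps S^1$ is infinite); then $x^0x^1l\ins f=x^0x^1\ins(l\ins f)=0$ and we are done. So assume from now on that $l\ins f\ne0$ for all $l\ne0$. Then the contraction $S^3\to S_1$, $c\mapsto c\ins f$, is, up to the apolarity pairings, the transpose of the injective map $S^1\to S_3$, $l\mapsto l\ins f$, hence surjective; thus $\dim Y=6$ and, exactly as in the Introduction, $\dim(X\cap Y)\ge3$.

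Next, let $X'\subseteq X$ be the closed locus of cubics with a repeated factor, i.e. of the shape $\Span{m^2n}$ or $\Span{m^3}$. A point of $X\cap Y$ lying off $X'$ is exactly a reduced decomposable apolar cubic, which is what we want. Since $X\cap Y\subseteq X'$ would force $X\cap Y=X'\cap Y$, it suffices to prove $\dim(X'\cap Y)\le2$. To estimate this I would parametrise the double-line part by pairs $(\Span{m},\Span{n})$ with $m^2n\ins f=0$, the map $(\Span{m},\Span{n})\mapsto\Span{m^2n}$ being generically injective. For fixed $\Span{m}$ the admissible $\Span{n}$ are the nonzero vectors annihilated by the conic $q:=m^2\ins f$, so the fibre is empty, a point, a line, or a plane according as $q$ has rank $3,2,1,0$. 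Stratifying $\Ps S^1$ by the rank of $q$: the rank-$2$ stratum lies in the sextic $\{\det(m^2\ins f)=0\}$ and contributes dimension $\le2$ (that sextic is a curve, or is all of $\Ps S^1$ with generic fibre a point); the rank-$\le1$ locus $\Sigma_1$ contributes $\le\dim\Sigma_1+1$; and the rank-$0$ locus $\Sigma_0:=\{\Span{m}:\ m^2\ins f=0\}$ contributes $\le\dim\Sigma_0+2$. The triple-line part $\{\Span{m}:\ m^3\ins f=0\}$ has dimension $\le2$ as well. Hence everything reduces to the two claims
\[
\dim\Sigma_0\le0\qquad\text{and}\qquad\Sigma_1\ne\Ps S^1\;.
\]

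Establishing these is the heart of the proof and the step I expect to be the main obstacle; the point is that each, if violated, would force $f$ to be a power and thus contradict conciseness. For $\Sigma_0$, if it contained a line $\Ps U$ (with $U\subseteq S^1$ a plane) then $m^2\ins f=0$ for all $m\in U$, whence by polarisation $uu'\ins f=0$ for all $u,u'\in U$; a short monomial computation then shows that $f$ is divisible by the cube of a linear form, hence is a form in at most two variables, so $N\ne\{0\}$ — a contradiction. An irreducible curve in $\Sigma_0$ is excluded in the same spirit, since the squares $\{m^2:\Span{m}\in\Sigma_0\}$ would span a subspace of $S^2$ too large to annihilate a concise $f$. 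For $\Sigma_1$, the equality $\Sigma_1=\Ps S^1$ would mean that $m^2\ins f$ is a perfect square for every $m$, i.e. that $\Span{m}\mapsto\Span{m^2\ins f}$ maps $\Ps S^1$ into the rank-one conics of $\Ps S_2$; this again is the signature of a power and is incompatible with $N=\{0\}$. Granting the two claims, the stratified estimate yields $\dim(X'\cap Y)\le2<3\le\dim(X\cap Y)$, so $X\cap Y$ meets the reduced locus and the desired triple of distinct points exists.
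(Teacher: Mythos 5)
Your architecture is sound and genuinely different from the paper's proof: the paper first fixes $x^0$ with ${x^0\,}^4\ins f\ne 0$ and studies the degenerate conics apolar to the cubic $x^0\ins f$ inside $\Ps\Ker (x^0\ins f)_{2,1}$, handling the bad case (all such conics divisible by $x^0$) with a pencil-of-conics argument, whereas you stay in $\Ps S^3$ and bound the dimension of the repeated-factor locus $X'$. The parts you actually carry out are correct: the non-concise case, the transpose argument giving $\dim Y=6$, the stratification of the double-line locus by $\rk(m^2\ins f)$ with fibres of dimension $0$, $1$, $2$, and the line case of the claim on $\Sigma_0$ (polarization plus the monomial computation showing $f=\ell^3\cdot(\text{linear})$). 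Granting your two displayed claims, the conclusion does follow.

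However, there is a genuine gap, and it sits exactly where you flag it: neither $\dim\Sigma_0\le 0$ nor $\Sigma_1\ne\Ps S^1$ is proved, and the justifications offered are not proofs. For $\Sigma_0$: if an irreducible curve $C$ of degree $\ge 2$ lay in $\Sigma_0$, then (since at most a one-dimensional space of conics vanishes on such a $C$) the squares indeed span a subspace of dimension $\ge 5$, so $\rk f_{2,2}\le 1$; but you must then prove that $\rk f_{2,2}\le 1$ contradicts conciseness --- true, but a lemma, not a remark (it follows, e.g., from self-adjointness of $f_{2,2}$, which forces $a_4\left(qq',f\right)=c\,a_2(q,g)\,a_2(q',g)$ for a generator $g$ of the image, after which the relation ${y_0}^2{y_1}^2=\left(y_0y_1\right)^2$ forces $c=0$ or $\rk g\le 1$ and hence $\dim\operatorname{Im}f_{3,1}\le 1$; alternatively from Macaulay's bound on the Hilbert function $(1,3,h_2,3,1)$). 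The claim on $\Sigma_1$ is harder still, and ``this is the signature of a power'' is not an argument: observe that $m^2\ins f$ is precisely the Hessian quadratic form, at the point $m$, of $f$ regarded as a quartic function on $S^1$, so $\Sigma_1=\Ps S^1$ is a rank-one-Hessian hypothesis, a close relative of Hesse's classical theorem that ternary forms with identically vanishing Hessian are cones --- a statement which is true but nontrivial (and false in many variables), so it cannot be waved through. A complete elementary route does exist: for $m$ with $m^2\ins f=\ell^2\ne 0$ and any $n$, the curve $t\mapsto(m+tn)^2\ins f$ lies in the cone of squares; since the quadratic Veronese surface contains no lines and the only conics on it are images of lines, this forces $\ell\mid\left(mn\ins f\right)$ for every $n$; Euler's identity then gives $m\ins f=\ell^2\cdot(\text{linear})$, and $m^3\ins f=2\left(m\ins\ell\right)\ell^2$ shows $m\ins f\in\Span{\ell^3}$ for generic $m$, so the plane $\Ps\left(\operatorname{Im}f_{1,3}\right)$ would lie inside the cubic Veronese surface, which contains no lines, contradicting conciseness. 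Until arguments of this kind (or citations) are supplied for both claims, the proof is incomplete at its heart.
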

\begin{proof}
The case $f=0$ being trivial, let us assume $f\ne 0$. Since the image of the (vector) Veronese map $S^1\to S^4$, $x\mapsto x^4$, spans $S^4$, we can fix $x^0\in S^1$ with \[{x^0\,}^4\ins f\ne 0\;.\] It follows, in particular, that $g:=x^0\ins f\ne 0$. The dimension of $V:=\Ker g_{2,1}$ is at least three because $g_{2,1}$ maps $S^2$ into $S^1$. Since the locus $X\subset\Ps S^2$ given by reducible forms is a hypersurface, we have that the intersection $Y:=\Ps V\cap X$ is an algebraic set of dimension at least one. For distinct $\Span{x},\Span{y}\in\Ps S^1$, we have that $\Span{x^2},\Span{y^2}\in Y$ implies $\Span{x^2+y^2}\in Y$, and $x^2+y^2$ is a simply degenerate quadratic form ($\operatorname{char}K=0\ne 2$). We deduce that the set $U$ of $\Span{x^1l}\in Y$ with distinct $\Span{x^1},\Span{l}\in\Ps S^1$, is a nonempty open subset of $Y$. But $\Span{x^1l}\in U\subseteq\Ps L$ means that $x^1l\ins g=0$, and $x^1l\ins g=x^0x^1l\ins f$. Hence it remains only to prove that we can choose $\Span{x^1},\Span{l}$ different from $\Span{x_0}$.

Suppose then that all $q\in U$, and hence all $q\in Y$ are divisible by $x_0$. We can choose two distinct $\Span{y},\Span{z}\in\Ps S^1$ such that $\Span{x^0y},\Span{x^0z}\in Y$. Let \[q\in V-\Span{x^0y,x^0z}\;.\]
It can not be $q=x^0w$ with $w\in S^1$, otherwise $y$, $z$ and $w$ would be linearly independent, and hence $x^0\in\Span{y,z,w}$ (this would lead to ${x^0\,}^2\in V$, and henceforth ${x^0\,}^3\ins f=0$, meanwhile ${x^0\,}^4\ins f\ne 0$). If $q\in Y$ we can find $q'\in U-\Span{x^0y,x^0z}$, so that $q'=x^1l$ with $\Span{x^0},\Span{x^1},\Span{l}$ all distinct as required.

We are left with the case when $q\not\in Y$, so that $q$ is nondegenerate. Note that the (distinct) lines $y=0$ and $z=0$ in $\Ps{S_1}$ do not meet at a point lying on the line $x^0=0$, otherwise $x^0\in\Span{y,z}$ which would lead to ${x^0\,}^3\ins f=0$ as before. Hence we can find $w\in\Span{y,z}$ such that the line $w=0$ meets $q=0$ in distinct points that are also outside the line $x^0=0$. Now $\Ps\Span{q,x_0w}\subset\Ps V$ gives a pencil of conics in $\Ps S_1$. Looking at its base points, we easily deduce the existence of a simply degenerate form $x^1l$, with $\Span{x^1},\Span{l}$ both different from $\Span{x_0}$ as required.
\end{proof}

We tried to refine the above arguments to get \emph{linearly independent} $x^0,x^1,l$ with $x^0x^1l\ins f=0$. But, in view of our goal, we found easier to adapt the proof of~\autoref{Gener} to the special case when $l\in\Span{x^0,x^1}$ and $\Span{x^0},\Span{x^1},\Span{l}\in\Ps S^1$ all distinct. At the moment of writing, we do not know if linearly independent $x^0,x^1,x^2\in S^1$ with $x^0x^1x^2\ins f=0$ can be found for \emph{every} $f\in S_4$.

\section{The special case}

The following proposition is about the special case of linearly dependent $x^0,x^1,l$ (but with $\Span{x^0},\Span{x^1},\Span{l}\in\Ps S^1$ all distinct, i.e., $x^0,x^1,l$ are pairwise linearly independent). This can be proved much like \autoref{Gener}, but at the cost of leaving out a (more) special case, which still needs work. That is why below we are adding the hypothesis that ${x^1\,}^2\ins f$, ${x^2\,}^2\ins f$ and $l^2\ins f$ do not vanish. We shall show how to remove this hypothesis at the end of this section.

\begin{prop}\label{Spe}
Let $\dim S_1=3$, $f\in S_4$. If there exist linearly dependent, but pairwise linearly independent, $x^0,x^1,l\in S^1$, such that $x^0x^1l\ins f=0$, but ${x^1\,}^2\ins f$, ${x^2\,}^2\ins f$ and $l^2\ins f$ are all nonzero, then $\rk f\le 7$.
\end{prop}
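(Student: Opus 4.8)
The plan is to follow the proof of \autoref{Gener} almost verbatim, with \autoref{Casi2} taking over the role of \autoref{Casi}. Geometrically, the hypothesis means that the three lines $x^0=0$, $x^1=0$, $l=0$ of $\Ps S_1$ are distinct but \emph{concurrent}, and this collision of the three vertices is exactly the degeneration recorded by \autoref{Casi2}. I would first complete $x^0,x^1$ to a basis $x^0,x^1,x^2$ of $S^1$, take the dual basis $x_0,x_1,x_2$ of $S_1$, and write $l=ax^0+bx^1$ with $a,b\neq 0$; the three lines then all pass through $\Span{x_2}$. Putting $u:=bx_0-ax_1$, so that $\{l\}^\perp=\Span{x_2,u}$, I attach to the three lines the binary-quartic spaces
\[
V_0:=\Sy^4\Span{x_1,x_2}\,,\qquad V_1:=\Sy^4\Span{x_0,x_2}\,,\qquad V_l:=\Sy^4\Span{x_2,u}\,.
\]

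The first step reproduces the splitting of \autoref{Gener}. Since $x^0x^1l=a\,{x^0}^2x^1+b\,x^0{x^1}^2$ annihilates every element of each $V_i$ (for $V_l$ one notes that $l\ins v=0$ whenever $v\in V_l$, because $l$ kills both $x_2$ and $u$), we get $V_0+V_1+V_l\subseteq\Ker\left(f\mapsto x^0x^1l\ins f\right)$; as the contraction map onto $S_1$ is surjective ($x^0x^1l$ is a nonzero, hence non-zero-divisor, cubic), its kernel is $12$-dimensional, and a monomial count gives $\dim\left(V_0+V_1+V_l\right)=12$ as well, so the inclusion is an equality. Thus $x^0x^1l\ins f=0$ yields $f=f_0+f_1+f_l$ with $f_i\in V_i$. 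Writing $\sigma$ for the summation map and $W:=\sigma^{-1}\left(\Span f\right)$, $W_i:=\alpha_i(W)$ as in \autoref{Gener}, I expect $\Ker\sigma$ to be three-dimensional, spanned by the two relations supported on $x_2^4$ together with the distinctive relation
\[
w:=\left(a\,x_1{x_2}^3,\,-b\,x_0{x_2}^3,\,u\,{x_2}^3\right)\,,
\]
which here replaces the three fourth-power relations of \autoref{Gener}. Consequently each $W_i$ contains $L_i:=\Span{{x_2}^4,{x_2}^3m_i}$ with $m_0=x_1$, $m_1=x_0$, $m_l=u$, i.e.\ a fourth power together with its tangent direction --- precisely the configuration $\Span{{x_0}^4,{x_0}^3x_1}$ of \autoref{Casi2}.

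Assuming $\dim W_i=3$ for every $i$, I would apply \autoref{Casi2} to each $W_i$ and then run the intersection argument of \autoref{3D}. The three parameters of $\Ker\sigma$ move $\Span{f_i}$ over the affine plane $A_i:=\Ps W_i-\Ps L_i$, coupled through the common parameter of $w$. The goal is to choose them so that two of the summands, say $f_0,f_1$, land on their rank-$\le 2$ loci $R_0,R_1$ while avoiding the rank-four loci $R'_0,R'_1$, the third summand $f_l$ then automatically having rank $\le 3$; this gives $\rk f\le 2+2+3=7$. Exactly as in \autoref{Gener}, such a point exists unless the analogue of \eqref{Vuoto} holds, that is unless $\widehat{R_0}\cap\widehat{R_1}$ is swallowed by $\widehat{R_0'}\cup\widehat{R_1'}\cup\widehat{R_l'}$; in that remaining situation I would pass to new variables $x'_0,x'_1,x'_2$ (again chosen so that ${x'}^0{x'}^1{x'}^2\ins f=0$, or so that some $\dim W'_i$ drops to $2$) to remove the obstruction. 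The degenerate subcase $\dim W_i=2$ is handled directly: one may then arrange $f_i=0$, so that $f$ lies in the sum of the other two spaces, which yields $g\ins f=0$ for $g$ a product of two of the pairwise linearly independent forms $x^0,x^1,l$, whence \autoref{Wu} applies.

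Two points will require the real work. First, \autoref{3D} is phrased for \autoref{Casi}-configurations, so I expect to need its counterpart for the collision-configurations of \autoref{Casi2}: here each $R'_i$ is at most a pair of \emph{points}, hence each $\widehat{R_i'}$ is at most a pair of lines in $\Ps W\cong\Ps^3$, while each $\widehat{R_i}$ is a surface, so that $\widehat{R_0}\cap\widehat{R_1}$ is at least a curve and cannot sit inside the finitely many lines $\widehat{R_0'}\cup\widehat{R_1'}\cup\widehat{R_l'}$ unless it degenerates onto (some of) them --- a pattern I would have to isolate and defeat by the change of variables above. Second, the nonvanishing hypotheses on ${x^0}^2\ins f$, ${x^1}^2\ins f$ and $l^2\ins f$ should be exactly what keeps the binary restrictions nondegenerate and so excludes the fully degenerate alternative~\ref{d2} of \autoref{Casi2}, in which every point of $A_i$ has rank three and no reduction below rank three is available. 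When one of these contractions vanishes, $f$ picks up a further apolar relation $g\ins f=0$ with $g$ a product of two linearly independent linear forms, and the bound once more reduces to \autoref{Wu}; this is the reduction the paper defers to the end of the section.
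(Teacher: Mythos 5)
Your splitting is, up to normalization, exactly the paper's own: the paper takes $y\in\{x^0,x^1\}^\perp-\{0\}$, normalizes $l\ins x_0=l\ins x_1=1$, and uses $x_0-x_1$ where you use $u$; your kernel of $\sigma$, the identification of each $W_i$ as a \autoref{Casi2}-configuration, and the handling of $\dim W_i=2$ via \autoref{Wu} all agree with the paper. The genuine gap is your second ``real work'' point, which is precisely the heart of the proof: it is \emph{false} that the hypotheses ${x^0}^2\ins f\ne 0$, ${x^1}^2\ins f\ne 0$, $l^2\ins f\ne 0$ exclude alternative~\ref{d2} of \autoref{Casi2}. Take $f=q\left(x_0,x_1\right){x_2}^2$ with $q$ a generic binary quadric. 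Since $2ab\,x_0x_1=b^2{x_0}^2+a^2{x_1}^2-u^2$, we can write $f=\lambda_0{x_1}^2{x_2}^2+\lambda_1{x_0}^2{x_2}^2+\lambda_2u^2{x_2}^2$ with all $\lambda_i\ne 0$ for generic $q$; each summand lies in $V_0$, $V_1$, $V_l$ respectively, so $x^0x^1l\ins f=0$, and $W_0=\Span{{x_1}^2{x_2}^2,x_1{x_2}^3,{x_2}^4}$, $W_1=\Span{{x_0}^2{x_2}^2,x_0{x_2}^3,{x_2}^4}$, $W_l=\Span{u^2{x_2}^2,u{x_2}^3,{x_2}^4}$. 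Thus \emph{all three} configurations are of type~\ref{d2}, with $R_i=R'_i=\emptyset$, while ${x^0}^2\ins f$, ${x^1}^2\ins f$, $l^2\ins f$ are all nonzero for generic $q$. This also undercuts your first point: in case~\ref{d2} the locus $\widehat{R_i}$ is empty, not a surface, so the intersection-of-cylinders argument has nothing to intersect, and within this splitting every admissible choice of parameters gives only $3+3+3=9$; your fallback (``pass to new variables'') is left entirely unspecified and is not how this obstruction can be removed in general, since the whole point of \autoref{Spe} is that no annihilating product of three independent forms is given.

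What the paper actually does when two of the spaces, say $W_0,W_1$, are of type~\ref{d2} is a direct computation, and this is where the nonvanishing hypotheses really enter. In that case $f_0\in\Span{{x_1}^2y^2,x_1y^3,y^4}$ and $f_1\in\Span{{x_0}^2y^2,x_0y^3,y^4}$, and since $x^0\ins f_0=l\ins f_2=0$, the rule~\eqref{Li} gives $x^0l\ins f=x^0l\ins f_1=\alpha y^2$ and likewise $x^1l\ins f=\beta y^2$; hence $\left(\beta x^0-\alpha x^1\right)l\ins f=0$. The hypothesis $l^2\ins f\ne 0$ then forces either $\beta x^0-\alpha x^1$ and $l$ to be linearly independent, or $\alpha=\beta=0$, in which case $x^0l\ins f=0$; either way \autoref{Wu} applies. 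So the hypotheses do not prevent the degeneration --- they make the apolar relation extracted from it usable. Relatedly, your closing remark is also wrong: if ${x^0}^2\ins f=0$, the form ${x^0}^2$ is a square, not a product of two linearly independent forms, so \autoref{Wu} does not apply; removing such a hypothesis is the content of the separate and substantially harder \autoref{Fine}. To complete your proposal you must replace the claimed exclusion of type~\ref{d2} by an argument of the paper's kind.
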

\begin{proof}
Let us choose $y\in\{x^0,x^1\}^\perp-\{0\}$, $x_1\in\{x^0\,\}^\perp-\Span{y}$ with $l\ins x_1=1$, $x_0\in\{x^1\,\}^\perp-\Span{y}$ with $l\ins x_0=1$ and set
\begin{equation}\label{V}
V_0:=\Sy^4\Span{x_1,y}\;,\quad V_1:=\Sy^4\Span{x_0,y}\;,\quad V_2:=\Sy^4\Span{x_0-x_1,y}
\end{equation}
($V_0,V_1,V_2\subset S_4$) (\footnote{The notation $V_2$ is slightly misleading, but it speeds up the exposition.}). Let \[\sigma:V_0\oplus V_1\oplus V_2\to V_0+V_1+V_2\] be the canonical map $\left(v_0,v_1,v_2\right)\mapsto v_0+v_1+v_2$. We have
\begin{equation}\label{Ker2}
\Ker\sigma=\Span{w_0,w_1,v}\;,
\end{equation}
with
\[
w_0:=\left(0,{x_2}^4,-y^4\right)\;,\quad w_1:=\left(y^4,0,-y^4\right)\;,\quad v:=\left(x_1y^3,-x_0y^3,\left(x_0-x_1\right)y^3\right)\;.
\]
From $x^0x^1l\ins s=0$ follows that $f\in V_0+V_1+V_2$. Then $W:=\sigma^{-1}\left(\Span{f}\right)$ is a four-dimensional vector space. For each $i\in\{0,1,2\}$, let $W_i$ be the image of $W$ in the summand $V_i$ through the projection map $V_0\oplus V_1\oplus V_2\to V_i$, and let us denote by $\alpha_i$ the restriction $W\to W_i$. For all $w\in\sigma^{-1}(f)$ we have
\begin{equation}\label{Dec3}
f=f_0+f_1+f_2\;,\quad f_i:=\alpha_i(w)\in W_i\;\,\forall i\;.
\end{equation}
From \eqref{Ker2} it follows that
\[
x_1y^3,y^4\in W_0\;,\quad x_0y^3,y^4\in W_1\;,\quad \left(x_0-x_1\right)y^3,y^4\in W_2\;.
\]
Hence for every decomposition~\eqref{Dec3} we have
\[
W_0=\Span{f_0,x_1y^3,y^4}\;,\quad W_1=\Span{x_0y^3,f_1,y^4}\;,\quad W_2=\Span{y^4,\left(x_0-x_1\right)y^3,f_2}
\]
(therefore $2\le\dim W_i\le 3$ for each $i$).

If $\dim W_i=2$ for some $i$, we can choose $w$ such that the decomposition~\eqref{Dec3} becomes
\[
f=g\left(z,y\right)+h\left(t,y\right)
\]
with $y,z,t\in S_1$ linearly independent, and the result follows from \autoref{Wu}.

From now on, we can assume that $\dim W_i=3$ for all $i$. Then we can exploit \autoref{Casi2} for each $i$ and get varieties $R_i$, $R'_i$ (with the obvious meaning of the notation). For each $i$, let
\[
\widehat{\alpha_i}:\Ps W-\Ps{\Span{w_0,w_1,v}}\longrightarrow\Ps{W_i}\;,\qquad \Span{w}\mapsto\Span{\alpha_i(w)}
\]
and set
\[
\widehat{R_i}:=\widehat{\alpha_i}^{-1}\left(R_i\right)\;,\qquad\widehat{R_i'}:=\widehat{\alpha_i}^{-1}\left(R_i'\right)\;.
\]
We are now in a situation similar to that of \autoref{3D}, and the loci $\widehat{R_i},\widehat{R'_i}$ are cylinders with vertices the (aligned, at infinity) points $\Span{w_0},\Span{w_1},\Span{w_0-w_1}$. As in that situation, the analysis can be pursued in different ways, one of which we outline as follows.

The good news brought by \autoref{Casi2} is that $R'_i$ always consists of at most two points. Suppose first that for $W_0$ we fall in \hyperref[d11]{Case~\ref{d11}} of \autoref{Casi2} with $R_0$ degenerate, so that there exists
\[
\Span{z}\in\Ps W_0-\Ps\Span{x_1y^3,y^4}
\]
with $\rk z=1$. Then we can choose $w\in\Span{{\alpha_0}^{-1}(z),w_0}-\Span{w_0}$ such that $\rk\alpha_1(w)\le 3$, $\rk\alpha_2(w)\le 3$. Hence \eqref{Dec3} for such a $w$ gives $\rk f\le 1+3+3=7$. The same argument works if for $W_1$ (or even for $W_2$) we fall into \hyperref[d11]{Case~\ref{d11}} of \autoref{Casi2} with $R_1$ (or, respectively, $R_2$) degenerate. With these cases excluded, it is not difficult to check that if at most one among $W_0,W_1,W_2$, say $W_i$, leads to \hyperref[d2]{Case~\ref{d2}} in \autoref{Casi2}, then we have $\left(\widehat{R_j}\cap\widehat{R_k}\right)-\left(\widehat{R_0'}\cup\widehat{R_1'}\cup\widehat{R_2'}\right)\ne\emptyset$, with $j,k$ being the two indices other than~$i$. This clearly gives $\rk f\le 7$ (as in the proof of \autoref{Gener}).

Now, we can assume that for $W_0,W_1$ we fall in \hyperref[d2]{Case~\ref*{d2}} of \autoref{Casi2}, that is,
\[
W_0=\Span{{x_1}^2y^2,x_1y^3,y^4}\;,\qquad W_1=\Span{{x_0}^2y^2,x_0y^3,y^4}\;.
\]
Let us fix a decomposition~\eqref{Dec3} (corresponding to some $w$). Note that, by the choices of $x_0,x_1,y$ at the beginning of the proof and by~\eqref{V}, we have $x^0\ins f_0=x^1\ins f_1=l\ins f_2=x^0\ins y=x^1\ins y=l\ins y=0$. From~\eqref{Li} easily follows that
\[
x^0l\ins f=\alpha y^2\;,\qquad x^1l\ins f=\beta y^2\;,
\]
for some $\alpha,\beta\in K$. Hence $(\beta x^0-\alpha x^1)l\ins f=0$. Since $l^2\ins f\ne 0$, $(\beta x^0-\alpha x^1)$ and $l$ are linearly independent, so that $\rk f\le 7$ follows from \autoref{Wu}.
\end{proof}

Basically, the analysis in the above proof stopped when facing a very special $f$ (such that two among $W_0,W_1,W_2$ fall in \hyperref[d2]{Case~\ref*{d2}}, and after reordering $x^0,x^1,l$ accordingly, we have that $\beta x^0-\alpha x^1$ is proportional to~$l$). In order to settle this and then reach our goal of giving a new proof that $\rk f\le 7$, we now (more generally) work out the condition $l^2\ins f=0$. This way, the result will again be included, as \autoref{Wu}, in \cite[Theorem~3.6]{K}, but in this case we propose a proof which looks different (and fits into the approach of the present work).

\begin{prop}\label{Fine}
Let $\dim S_1=3$, $f\in S_4$. If there exist a nonzero $l\in S^1$ such that $l^2\ins f=0$ then $\rk f\le 7$.
\end{prop}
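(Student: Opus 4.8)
The plan is to put $f$ into a normal form and then reduce, in all but one configuration, to \autoref{Gener}. Fix $x^0\in S^1$ proportional to $l$ and complete it to a basis $x^0,x^1,x^2$ of $S^1$, with dual basis $x_0,x_1,x_2\in S_1$. The hypothesis $l^2\ins f=0$ says precisely that $f$ has degree at most one in $x_0$, so
\[
f=x_0\,g+h\;,\qquad g:=x^0\ins f\in\Sy^3\Span{x_1,x_2}\;,\quad h\in\Sy^4\Span{x_1,x_2}\;,
\]
with $g$ a binary cubic and $h$ a binary quartic in the two variables $x_1,x_2$. The point is that for $a,b\in\{l\}^\perp=\Span{x^1,x^2}$ one has $x^0ab\ins f=ab\ins(x^0\ins f)=ab\ins g$; hence a reducible apolar quadric $ab$ of $g$ with $a,b$ linearly independent yields a linearly independent triple $x^0,a,b$ with $x^0ab\ins f=0$, and \autoref{Gener} gives $\rk f\le 7$.

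First I would dispose of the easy behaviours of the binary cubic $g$. If $g=0$ then $f=h$ is binary and $\rk f\le 4$. If $g$ has three distinct roots (its unique apolar conic in $\Span{x^1,x^2}$ is then a pair of distinct points), or if $g=\ell^3$ is a perfect cube (its apolar ideal then already contains a linear form, hence reducible conics with distinct factors), then in either case $g$ admits a reducible apolar quadric $ab$ with $a,b\in\Span{x^1,x^2}$ linearly independent; since $x^0\notin\Span{x^1,x^2}$, the triple $x^0,a,b$ is linearly independent and \autoref{Gener} applies. The only surviving possibility is $g=\ell^2m$ with $\ell,m$ not proportional (a binary cubic of rank three), and this is where the real work lies: the apolar conic of $g$ is now a double point, so the trick with $a=x^0$ fails, and one checks that typically $(f^\perp)_2=\Span{l^2}$ and that $l$ is the only apolar double line, so neither \autoref{Wu} nor a change of distinguished line is available. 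Normalising coordinates inside $\Span{x_1,x_2}$ so that $\ell=x_1,\ m=x_2$, and using the residual freedom $x_0\mapsto x_0+sx_1+tx_2$ (which fixes both $l$ and $g$) to clear the $x_1^3x_2$ and $x_1^2x_2^2$ terms of $h$, one reaches the normal form $f=x_0x_1^2x_2+c_0x_1^4+c_3x_1x_2^3+c_4x_2^4$.

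For this remaining case I would still aim at \autoref{Gener}, by producing a totally decomposable cubic $abc\in(f^\perp)_3$ with $a,b,c$ linearly independent, i.e.\ an honest (non\-concurrent) apolar triangle. Existence I would argue by a dimension count: $\dim(f^\perp)_3\ge 7$, so $\Ps(f^\perp)_3$ contains a $\Ps^6\subset\Ps S^3=\Ps^9$; the locus of totally reducible cubics has dimension $6$ and therefore meets every linear subspace of dimension $\ge 3$, so the apolar totally reducible cubics form a family of dimension at least $3$. The \emph{concurrent} apolar triangles, on the other hand, fit through the incidence $\{([p],abc):a,b,c\in\{p\}^\perp\}$ over $\Ps S_1$ into a family whose general fibre $\Ps\!\left(\Sy^3\{p\}^\perp\cap(f^\perp)_3\right)$ is a single point, a fact I would check by computing $\Sy^3\Span{x^1,x^2}\cap(f^\perp)_3$ at $p=\Span{x_0}$; this bounds the concurrent family by dimension $2$. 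Since $3>2$, a non\-concurrent apolar triangle must exist, and \autoref{Gener} then finishes the proof.

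The main obstacle is exactly this last step: controlling the degenerate (concurrent) triangles tightly enough to guarantee a non\-degenerate one, since a priori the concurrent family could acquire excess dimension over a special locus of centres $[p]$. Should the incidence argument leave a few genuinely special values of $(c_0,c_3,c_4)$ uncovered, I would settle them by exhibiting an explicit independent triple and invoking \autoref{Gener}; for instance, for $f=x_0x_1^2x_2+x_1x_2^3=x_1x_2\left(x_0x_1+x_2^2\right)$ the triple $x^1+x^2,\ x^1-x^2,\ 3x^0+x^1$ is linearly independent and annihilates $f$, and analogous explicit triples cover the other normalised forms.
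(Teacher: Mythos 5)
Your strategy is genuinely different from the paper's. The paper's proof of \autoref{Fine} never produces a linearly independent annihilating triple: it constructs a totally reducible apolar cubic whose factors are only \emph{pairwise} independent (by intersecting a suitable apolar cubic with the double line $l^2=0$), and then routes the concurrent case through \autoref{Spe} and, when some ${x^i\,}^2\ins f=0$, through \autoref{Wu} via the difference-of-squares trick. What you attempt --- a non-concurrent apolar triangle for every such $f$, so that \autoref{Gener} alone suffices --- is precisely the refinement the author says he tried and abandoned (end of Section~4: he does not know whether linearly independent $x^0,x^1,x^2$ with $x^0x^1x^2\ins f=0$ exist for every $f$), so the burden on that step is high. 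Up to that point your argument is sound: the normal form $f=x_0{x_1}^2x_2+c_0{x_1}^4+c_3x_1{x_2}^3+c_4{x_2}^4$ is reached correctly, the cases where $g=x^0\ins f$ has rank $\le 2$ do reduce to \autoref{Gener}, and your explicit triple for $f=x_0{x_1}^2x_2+x_1{x_2}^3$ checks out.

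The gap is the bound ``concurrent apolar triangles have dimension $\le 2$'', and it is genuine: computing the fibre of the incidence variety over the single centre $\Span{x_0}$ only controls the components that dominate $\Ps S_1$, while components sitting over special centres can have excess fibre dimension --- and in your own family they do. Take $c_3=c_4=0$, i.e.\ $f={x_1}^2\left(x_0x_2+c_0{x_1}^2\right)$. Since this $f$ has degree $\le 1$ in $x_0$ and in $x_2$, \emph{every} cubic in $\Sy^3\Span{x^0,x^2}$ annihilates $f$; that is a $\Ps^3$ of totally reducible apolar cubics all of whose factors pass through $\Span{x_1}$, i.e.\ a three-dimensional family consisting entirely of \emph{concurrent} triangles. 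So the inequality $3>2$ fails there, and worse, this shows a $\ge 3$-dimensional family of totally reducible apolar cubics can lie wholly inside the concurrent locus, so no dimension count of this shape can finish the argument by itself; one must determine exactly which $\left(c_0,c_3,c_4\right)$ are bad (an analysis over all centres $[p]$, which you have not done) and then cover them, and your fallback of ``analogous explicit triples'' is a promise, not a proof --- after your normalization a continuum of parameters survives. The route can in fact be completed, but by a direct construction rather than a count: take $\bar a=x^1$, $\bar b=x^1+x^2$, $\bar c=x^1-x^2$, so that $\bar a\bar b\bar c\ins g=0$, and seek lifts $a=\bar a+a_0x^0$, $b=\bar b+b_0x^0$, $c=\bar c+c_0x^0$. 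Since $(x^0)^2S^1\subseteq\Ker f_{3,1}$, the condition $abc\ins f=0$ reduces to the linear system
\[
\left(a_0\bar b\bar c+b_0\bar a\bar c+c_0\bar a\bar b\right)\ins g=-\,\bar a\bar b\bar c\ins h\;,
\]
which is solvable because $\bar b\bar c,\bar a\bar c,\bar a\bar b$ span $\Sy^2\Span{x^1,x^2}$ and contraction by quadrics maps that space onto $\Span{x_1,x_2}$; the solutions form an affine line in the direction $(-2,1,1)$, along which the concurrency determinant $-2a_0+b_0+c_0$ is non-constant, so all but at most one solution give a linearly independent triple, for every choice of $\left(c_0,c_3,c_4\right)$. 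As submitted, however, your proof has a real hole at its central step.
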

\begin{proof}
The dimension of $V:=\Ker f_{3,1}$ is at least $7$, because $f_{3,1}$ maps $S^3$ into $S_1$. Let $W:=V\cap lS^2$, so that $\dim W\le 6$. If $\dim W=6$ then $l\ins f=0$ and therefore $f\in\Sy^4\{l\}^\perp$, so that $\rk f\le 4$. If $\dim W=5$ then $g:=l\ins f$ is of rank one because its polarization $g_{2,1}$ must be of rank one. This means that $g=z^3$ for some $z\in S_1$, and $l\ins z^3=l^2\ins f=0$. If we take $y\in S_1$ such that $l\ins y=1$, we have $f=yz^3+h$ with $h\in\Sy^4\{l\}^\perp$. Therefore, for whatever nonzero $m\in\{y,z\}^\perp$ we have $lm\ins f=0$ and $l,m$ are linearly independent because $l\ins y=1$, $m\ins y=0$. Hence the result follows from \autoref{Wu}.

From the above, now we can assume that $\dim W\le 4$. Also recall that $\dim V\ge 7$. Therefore the image of $V$ under the projection map $\pi:S^\bullet\to S^\bullet/(l)$ (with $(l)$ being the ideal generated by $l$) is of dimension at least three. It easily follows that there exists $p\in V$ such that the cubic $p=0$ in $\Ps S_1$ intersect the line $l=0$ in three distinct points $P_0,P_1,P_2$. To be concise, we now use a bit of elementary scheme-theoretical language. The scheme-theoretic intersection $Z$ of $p=0$ with the double line $l^2=0$ consists of $P_0,P_1,P_2$ doubled \emph{inside} three lines $x^0=0$, $x^1=0$, $x^2=0$ (`a point $P$ doubled inside a line $\ell$' is the degree two zero-dimensional scheme with ideal sheaf $\mathcal{I}_P^2+\mathcal{I}_\ell$). It is easy to see that the ideal of $Z$ in $S^\bullet$ is generated by $p,l^2$, so that $x^0x^1x^2=\alpha p+xl^2$, with $\alpha\in K$, $x\in S^1$. It follows that $x^0x^1x^2\ins f=0$, and $\Span{x^0}$, $\Span{x^1}$, $\Span{x^2}$ are distinct because the lines $x^i=0$ meet $l=0$ in distinct points. Therefore, in view of \autoref{Gener} and \autoref{Spe}, we can assume that for some $i$ we have ${x^i\,}^2\ins f=0$. But $\Span{x^i}\ne\Span{l}$, because the lines $x^i=0$ and $l=0$ meet only at $P_i$. Hence $l+x^i$ and $l-x^i$ are linearly independent,
\[
\left(l-x^i\right)\left(l+x^i\right)\ins f=\left(l^2-{x^i\,}^2\right)\ins f=0\;,
\]
and the result follows from \autoref{Wu}.
\end{proof}

Propositions~\ref{Reduc}, \ref{Gener}, \ref{Spe}, and~\ref{Fine} together give a bound of seven for \emph{every} plane quartic. Since it is well known that a nondegenerate conic together with a doubled tangent line gives a rank seven plane quartic, we end up with Kleppe's result that the maximal rank for plane quartics is seven (which solves the polynomial little Waring problem for $n=3$, $d=4$).

\end{document}